\documentclass[12pt]{iopart}

\usepackage{iopams}
\usepackage{nicefrac}

\usepackage{slashbox}

\usepackage{graphicx}
\usepackage{subfigure}

%#########################################################################
%### Macros
%#########################################################################

%--- Mathematical symbols
%-------------------------------------------------------------------------
\def\N			{\mathbb N}
\def\Z			{\mathbb Z}

\def\R			{\mathbb R}

\def\Radon		{\mathcal R}
\def\Back		{\mathcal B}
\def\Fourier	{\mathcal F}
\def\Int		{\mathcal I}

\def\L			{\mathrm L}

\def\B			{\mathrm B}

\def\Cont		{\mathcal C}

\def\d			{\mathrm d}
\def\e			{\mathrm e}
\def\i			{\mathrm i}

\def\bfx		{\mathbf x}

\def\loc		{\mathrm{loc}}

\def\supp		{\mathrm{supp}}
\def\diam		{\mathrm{diam}}

\def\sinc		{\mathrm{sinc}}

\def\FBP		{\mathrm{FBP}}

%--- Counter
%-------------------------------------------------------------------------
\newcounter{mycounter}

%--- Environments
%-------------------------------------------------------------------------

\newtheorem{lemma}[mycounter]{Lemma}
\newtheorem{theorem}[mycounter]{Theorem}
\newtheorem{corollary}[mycounter]{Corollary}

\newtheorem{example}[mycounter]{Example}

\newenvironment{proof}{\paragraph{\textit{Proof}}}{\hfill$\square$\bigbreak}

\begin{document}

\title[Error analysis for FBP reconstructions in Besov spaces]{Error analysis for filtered back projection reconstructions in Besov spaces}

\author{M Beckmann$^1$, P Maass$^2$ and J Nickel$^2$}
\address{$^1$ Department of Mathematics, University of Hamburg, Hamburg, Germany}
\address{$^2$ Center for Industrial Mathematics, University of Bremen, Bremen, Germany}
\eads{\mailto{matthias.beckmann@uni-hamburg.de}, \mailto{pmaass@math.uni-bremen.de}, \mailto{junickel@uni-bremen.de}}

\begin{abstract}
Filtered back projection (FBP) methods are the most widely used reconstruction algorithms in computerized tomography (CT).
The ill-posedness of  this inverse problem allows only an approximate reconstruction for given noisy data.
Studying the resulting reconstruction error has been a most active field of research in the 1990s and has recently 
been revived in terms of optimal filter design and estimating the FBP approximation errors in general Sobolev spaces.

However, the choice of Sobolev spaces is suboptimal for characterizing typical CT reconstructions.
A widely used model are sums of characteristic functions, which are better modelled in terms of Besov spaces $\B^{\alpha,p}_q(\R^2)$.
In particular $\B^{\alpha,1}_1(\R^2)$ with $\alpha \approx 1$ is a preferred model in image analysis for describing natural images.

In case of noisy Radon data the total FBP reconstruction error 
$$\|f-f_L^\delta\| \le \|f-f_L\|+ \|f_L - f_L^\delta\|$$
splits into an approximation error and a data error, where $L$ serves as regularization parameter.
In this paper, we study the approximation error of FBP reconstructions 
for target functions $f \in \L^1(\R^2) \cap \B^{\alpha,p}_q(\R^2)$ with positive $\alpha \not\in \N$ and $1 \leq p,q \leq \infty$.  
We prove that the $\L^p$-norm of the inherent FBP approximation error $f-f_L$ can be bounded above by
\begin{equation*}
\|f - f_L\|_{\L^p(\R^2)} \leq c_{\alpha,q,W} \, L^{-\alpha} \, |f|_{\B^{\alpha,p}_q(\R^2)}
\end{equation*}
under suitable assumptions on the utilized low-pass filter's window function $W$. This then extends by classical methods to estimates for the total reconstruction error.
\end{abstract}

\noindent{\it Keywords\/}: 
Filtered back projection, error estimates, convergence rates, Besov spaces

\section{Introduction}

We consider the classical inverse problem of reconstructing a function $f: \Omega \rightarrow \R$, $\Omega \subset \R^2$, from its line integrals, which is  the mathematical model underlying X-ray computerized tomography (CT).
The line integrals of $f$ are defined by the {\em Radon transform} $\Radon f \equiv \Radon f(t,\theta)$, given by
\begin{equation*}
\Radon f(t,\theta) = \int_{\ell_{t,\theta}} f(x,y) \: \mathrm{d}(x,y)
\quad \mbox{ for } (t,\theta) \in \R \times [0,\pi),
\end{equation*}
where the set
\begin{equation*}
\ell_{t,\theta} = \bigl\{(x,y) \bigm| x\cos(\theta)+y\sin(\theta) = t \bigr\} \subset \R^2
\end{equation*}
denotes the unique straight line that is orthogonal to the unit vector $n_\theta = (\cos(\theta),\sin(\theta))$ and has (signed) distance~$t$ to the origin, i.e., $\ell_{t,\theta}$ passes through $(t \cos(\theta), t\sin(\theta)) \in \R^2$.

For $f \in \L^1(\R^2) \cap \Cont(\R^2)$ with $\Fourier f \in \L^1(\R^2)$ an analytical inversion formula is given by
\begin{equation*}
f = \frac{1}{2} \, \Back I  \left(  \Radon f \right).
\end{equation*}
Here $\Back$ denotes the back projection operator, which is the $\L^2$-adjoint of $\Radon$ given by
\begin{equation*}
\Back h(x,y) = \frac{1}{\pi} \int_0^{\pi} h(x\cos(\theta)+y\sin(\theta),\theta) \: \mathrm{d}\theta
\quad \mbox{ for } (x,y) \in \R^2,
\end{equation*}
and $I$ denotes the Riesz potential defined via the one-dimensional Fourier transform acting on the $t$-variable, i.e.,
\begin{equation*}
 \Fourier g(S,\theta) = \int_{\R} g(t,\theta) \, \mathrm{e}^{-\mathrm{i} t S} \: \mathrm{d}t
\quad \mbox{ for } (S,\theta) \in \R \times [0,\pi)
\end{equation*}
and
\begin{equation*}
\Fourier \left( I  g \right) (S,\theta) = |S| \Fourier g(S,\theta).
\end{equation*}
The assumption of a continuous $f$ is necessary in order to ensure that the inversion formula holds pointwise, cf.~\cite{Natterer2001,Beckmann2019a}. 
However, the continuity assumption is not needed for error estimates concerning  regularized reconstruction algorithms as discussed in the sequel of this paper.

The analytical inversion is unstable with respect to highly oscillating variations of $g = \Radon f$, which motivates the introduction of a regularized inversion formula, the so-called method of filtered back projection (FBP).
FBP is based on introducing a window function
\begin{eqnarray*}
W: & \R & \to \R \\
& S & \mapsto W(S)
\end{eqnarray*}
with $\|W\|_\infty < \infty$, which has either bounded support or decays fast enough at infinity.
The window function $W$ is scaled by a parameter $L$ and we introduce the low-pass filter
\begin{equation*}
A_L(S) = |S| \, W(\nicefrac{S}{L})
\quad \mbox{ for } S \in \R
\end{equation*}
to replace the Fourier multiplier $|S|$ in the Riesz potential leading to the {\em approximate} FBP formula
\begin{equation*}
f_L(x,y) = \frac{1}{2} \, \Back\big(\Fourier^{-1}[A_L(S) \Fourier(\Radon f)(S,\theta)]\big)(x,y)
\quad \mbox{ for } (x,y) \in \R^2 \ .
\end{equation*}
Here, $L$ serves as regularization parameter and will be adapted depending on the noise level in the data.
Window functions of typical low-pass filters are displayed in Table~\ref{tab:lpf}.

\begin{table}[b]
\centering
\begin{tabular}{l|c|c}
Name & $W(S)$ for $|S|\leq 1$ & Parameter\\
\hline
Ram-Lak & $1$ & - \\
Shepp-Logan & $\sinc(\nicefrac{\pi S}{2})$ & - \\
Cosine & $\cos(\nicefrac{\pi S}{2})$ & - \\
Hamming & $\beta + (1-\beta) \cos(\pi S)$ & $\beta \in [\nicefrac{1}{2},1]$ \\
%Gaussian & $\exp(-(\nicefrac{\pi S}{\beta})^2)$ & $\beta > 1$ \\
\end{tabular}
\caption{Window functions of commonly used low-pass filters, where $W(S) = 0$ for all $|S| > 1$ in all cases.}
\label{tab:lpf}
\end{table}

\bigbreak

In case of noisy data $g^\delta$ with $\|g^\delta - \Radon f\| \le \delta$ we compute the reconstruction 
\begin{equation*}
f_L^\delta(x,y) = \frac{1}{2} \, \Back\big(\Fourier^{-1}[A_L(S) \Fourier(g^\delta)(S,\theta)]\big)(x,y)
\end{equation*}
and the total reconstruction error 
\begin{equation*}
\|f-f_L^\delta\| \le \|f - f_L\|+ \|f_L - f_L^\delta\|
\end{equation*}
splits into an approximation error
\begin{equation*}
e_L = f-f_L,
\end{equation*}
whose analysis is the main target of the present paper, and a data error $f_L - f_L^\delta$.

Such FBP methods are the most widely used reconstruction algorithms in computerized tomography (CT).
Studying the resulting reconstruction error has been a most active field of research in the 1990s and has recently been revived in terms of optimal filter design and estimating the approximation errors in general Sobolev spaces \cite{Beckmann2019,Beckmann2019a,Beckmann2020}.
We will review the state of research in more detail in the next section.

For motivation of the present paper, we note that the choice of Sobolev spaces is suboptimal for characterizing typical CT reconstructions.
A widely used model for CT reconstructions are sums of characteristic functions, which are better modelled in terms of Besov spaces $\B^{\alpha,p}_q(\R^2)$.
In particular, $\B^{\alpha,1}_1(\R^2)$ with $\alpha$ close to $1$ is a preferred model in image analysis for describing natural images \cite{Lucier2019,Pinzon2001}.

Therefore, in this paper we focus on extending the analysis of the approximation error of FBP reconstructions 
to target functions $f \in \L^1(\R^2) \cap \B^{\alpha,p}_q(\R^2)$ with positive $\alpha \not\in \N$ and $1 \leq p,q \leq \infty$.  
We prove that the $\L^p$-norm of the inherent FBP approximation error $f-f_L$ can be bounded above by
\begin{equation*}
\|f - f_L\|_{\L^p(\R^2)} \leq c_{\alpha,q,W} \, L^{-\alpha} \, |f|_{\B^{\alpha,p}_q(\R^2)}
\end{equation*}
under suitable assumptions on the utilized low-pass filter's window function $W$. This then extends by classical methods to estimates for the total reconstruction error.

The transition from error estimates in Sobolev spaces to Besov spaces requires substantially different techniques. 
The Sobolev space estimates in \cite{Beckmann2019,Beckmann2019a,Beckmann2020} are implicitly based on Plancherel's formula, which states that the Fourier transform is an isometry between $\L^2$-spaces, which is not  the case for $\L^p$-spaces with $p \ne 2$.
Hence, the definition of Besov spaces in terms of moduli of smoothness requires different analytical tools for estimating the reconstruction error with respect to the target function's Besov norm.
  
The paper is organized as follows. In Section 2 we review the state of the art concerning approximation errors of FBP reconstruction for functions on unbounded domains.
We then introduce the definition of Besov spaces used in this paper along with some technical Lemmata, which will be needed later for estimating $\|f - f_L\|_{\L^p(\R^2)}$.

Section 4 then contains the main results of the paper. The proofs are split into the cases $0 < \alpha < 1, 1 \le p,q \le \infty$ and
$1 < \alpha , \alpha \notin \N, 1 \le p,q \le \infty$.
We also include a straight forward result on how these results on $e_L = f - f_L$ extend to an estimate of the total approximation error for noisy data.

Section 5 contains some numerical experiments confirming the theoretical findings of the previous section.

\section{State of the art}

Although the FBP method has been one of the standard reconstruction algorithms in CT for decades, its error analysis and convergence behaviour are not completely settled so far.
We shortly summarize the available literature on estimating total reconstruction and approximation errors for FBP reconstructions.
For a general reference we refer to the standard textbooks \cite{Natterer2001,Natterer2001a} and to the introductory chapters of \cite{Beckmann2019a}, which contains an in-depth description and comparison of the results by Madych, which are most relevant for our approach.
Indeed, the description of the state of the art in \cite{Beckmann2019a} serves as the main reference for the following summary.

FBP algorithms and their approximation properties were explicitely or at least implicitely addressed already in the very first papers and textbooks \cite{Herman2009,Natterer2001} on the mathematics of computerized tomography.
Arguably the first paper addressing an analysis of $e_L = f - f_L$ in a classical function space setting is~\cite{Popov1990}.
There, Popov showed pointwise convergence however with a restriction to a small class of piecewise smooth functions.
Pointwise convergence and $\L^\infty$-error estimates for $e_L$ are also discussed by Munshi et al.\ in~\cite{Munshi1991,Munshi1992}. 
Their results are supported by numerical experiments in~\cite{Munshi1993}.

The approach of Rieder and Schuster~\cite{Rieder2003a} leads to $\L^2$-convergence with suboptimal rates for compactly supported Sobolev functions.
In contrast to this, in~\cite{Rieder2003,Rieder2007} Rieder et al.\ prove optimal $\L^2$-convergence rates for sufficiently smooth Sobolev functions.
However, the authors verify their assumptions only for a restricted class of filters based on B-splines.
More recently, Qu~\cite{Qu2016} showed convergence without rates in the $\L^2$-norm for compactly supported $\L^\infty$-functions and in points of continuity under additional assumptions.
Note that~\cite{Qu2016} deals with the continuous problem, while~\cite{Rieder2003,Rieder2003a,Rieder2007} discuss discrete settings.

More relevant for our present paper is the approach described by Madych, who proves
error bounds on the $\L^p$-norm of $e_L$ in terms of $\L^p$-moduli of continuity of the target function $f$, see~\cite{Madych1990}.
Madych chooses a convolution kernel $K: \R^2 \to \R$ as an approximation of the identity and computes the convolution product $f*K_L$ to approximate the target function $f$, where, for $L>0$, the scaled kernel $K_L$ is given by
\begin{equation*}
K_L(x,y) = L^2 \, K(Lx,Ly)
\quad \mbox{ for } (x,y) \in \R^2.
\end{equation*}
If $K$ is chosen to be a uniform sum of ridge functions, the convolution $f*K_L$ can be expressed in terms of the Radon data $\Radon f$ as in the approximate FBP formula~(\ref{eq:FBP_approximate}), see~\cite[Proposition 1]{Madych1990}.
The assumptions on $K$ for proving these results are rather restrictive, in particular they require continuous filter functions, which e.g. excludes the case of a ramp filter.
Using an essentially different approach, \cite{Beckmann2019,Beckmann2019a,Beckmann2020} then proved Sobolev space estimates in a more general setting with substantially weaker assumptions on the filter, including all classical choices.

To some extend, the approach of Madych is a special case of the mollifier approaches used in \cite{Louis1996,Louis1990,Rieder2000}.
However, neither Sobolev nor Besov space error estimates for the continuous case are derived in these papers.

\section{Besov spaces and technical Lemmata}
  
The focus of the paper is on analysing the $\L^p$-norm of the inherent FBP reconstruction error $e_L = f - f_L$ for target functions $f \in \L^1(\R^2) \cap \B^{\alpha,p}_q(\R^2)$ with positive $\alpha \not\in \N$ and $1 \leq p,q \leq \infty$, where
\begin{equation*}
\B^{\alpha,p}_q(\R^2) = \bigl\{f \in \L^p(\R^2) \bigm| |f|_{\B^{\alpha,p}_q(\R^2)} < \infty\bigl\}
\end{equation*}
with
\begin{equation*}
|f|_{\B^{\alpha,p}_q(\R^2)} = \cases{
\left(\int_0^\infty \big(t^{-\alpha} \, \omega_p(f,t)\big)^q \: \frac{\d t}{t}\right)^{\nicefrac{1}{q}} & for $1 \leq q < \infty$, \\
\sup_{t>0} t^{-\alpha} \, \omega_p(f,t) & for $q = \infty$}
\end{equation*}
for $0 < \alpha < 1$, where
\begin{equation*}
\omega_p(f,\delta) = \sup_{\|(X,Y)\|_{\R^2} \leq \delta} \|f(\cdot-X,\cdot-Y) - f\|_{\L^p(\R^2)}
\quad \mbox{ for } \delta > 0,
\end{equation*}
and
\begin{equation*}
|f|_{\B^{\alpha,p}_q(\R^2)} = \sum_{|\boldsymbol{j}| = n} \Bigl(\begin{array}{c} n \\ \boldsymbol{j} \end{array}\Bigr) \, |f^{(\boldsymbol{j})}|_{\B^{\theta,p}_q(\R^2)} = \sum_{j_1+j_2 = n} \frac{n!}{j_1! j_2!} \, |f^{(j_1,j_2)}|_{\B^{\theta,p}_q(\R^2)}
\end{equation*}
for $\alpha = n + \theta$ with $n \in \N$ and $0 < \theta < 1$.
As a general reference for properties of Besov spaces we refer to \cite{Leoni2017}.

We start by proving some technical Lemmata, which will be needed in the subsequent sections for estimating $e_L$. The critical part in these estimates is to control the $\L^p$-, resp.\ $\L^\infty$-norm of the modulus of smoothness in the definition of the Besov semi-norm.

We start with an $\L^p$-estimate.
\begin{lemma}
\label{lem:embedding_p}
Let $1 \leq q < p < \infty$ and $\alpha > 0$.
Further, let $g: [0,\infty) \to [0,\infty)$ be an increasing function.
Then, for any $c > 1$,
\begin{equation*}
\left(\int_0^\infty \big(t^{-\alpha} \, g(t)\big)^p \: \frac{\d t}{t} \right)^{\nicefrac{1}{p}} \leq c^{2\alpha} \, \log(c)^{\nicefrac{1}{p}-\nicefrac{1}{q}} \left(\int_0^\infty \big(t^{-\alpha} \, g(t)\big)^q \: \frac{\d t}{t} \right)^{\nicefrac{1}{q}}.
\end{equation*}
\end{lemma}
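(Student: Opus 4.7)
The plan is to carry out a $c$-adic decomposition of the integration domain $(0,\infty)$ into the intervals $I_k = [c^k, c^{k+1}]$ for $k \in \Z$, and to use the monotonicity of $g$ to pass to a discrete sequence problem where the embedding $\ell^q \hookrightarrow \ell^p$ (for $q \leq p$) takes over.

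Concretely, on each $I_k$ I have $\int_{I_k} \frac{\d t}{t} = \log(c)$, together with the two-sided control
$g(c^k) \leq g(t) \leq g(c^{k+1})$ and $c^{-\alpha(k+1)} \leq t^{-\alpha} \leq c^{-\alpha k}$.
Setting $b_k = c^{-\alpha k} g(c^k)$, I would first bound the left-hand side from above by
\begin{equation*}
\int_{I_k} \big(t^{-\alpha} g(t)\big)^p \, \frac{\d t}{t} \leq \log(c) \, \big(c^{-\alpha k} g(c^{k+1})\big)^p = \log(c) \, c^{\alpha p} \, b_{k+1}^p,
\end{equation*}
so summing over $k$ yields
$\|t^{-\alpha}g\|_{\L^p((0,\infty),\d t/t)} \leq \log(c)^{1/p} c^\alpha \|b\|_{\ell^p(\Z)}$.
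For the lower bound on the right-hand side, the reverse one-sided estimates give
\begin{equation*}
\int_{I_k} \big(t^{-\alpha} g(t)\big)^q \, \frac{\d t}{t} \geq \log(c) \, c^{-\alpha q} \, b_k^q,
\end{equation*}
so that $\|t^{-\alpha}g\|_{\L^q((0,\infty),\d t/t)} \geq \log(c)^{1/q} c^{-\alpha} \|b\|_{\ell^q(\Z)}$.

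It then remains to invoke the elementary embedding $\|b\|_{\ell^p} \leq \|b\|_{\ell^q}$ valid for $q \leq p$ (and for nonnegative sequences this is immediate from the estimate $b_k \leq \|b\|_{\ell^q}$ giving $b_k^p \leq b_k^q \|b\|_{\ell^q}^{p-q}$ and summing), to chain the two bounds together. Multiplying the $\log(c)^{1/p}c^\alpha$ factor from above by the reciprocal $\log(c)^{-1/q} c^\alpha$ of the lower bound factor produces exactly $c^{2\alpha}\log(c)^{1/p - 1/q}$, as required.

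I do not expect any real obstacle: the only step that demands care is the bookkeeping of the shift between $b_k$ and $b_{k+1}$ (which is what produces the power $2\alpha$ rather than $\alpha$), and a short remark that whenever either side is infinite the statement is trivial, so one may assume finiteness of $\|b\|_{\ell^q}$ throughout.
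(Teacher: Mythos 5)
Your argument is correct and is essentially the paper's own proof in Appendix~A: the same $c$-adic partition of $(0,\infty)$, the same endpoint bounds from the monotonicity of $g$ reducing both integrals to the sequence $b_k$, and the same use of the embedding $\|b\|_{\ell^p}\leq\|b\|_{\ell^q}$ for $q\leq p$, with only a harmless reindexing ($c^k$ versus $c^{-k}$) and the index shift $b_{k+1}$ accounting for the factor $c^{2\alpha}$ exactly as in the paper.
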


The proof of this Lemma is mostly technical and has been moved to Appendix A.
We now use the previous Lemma for proving an $\L^\infty$-estimate, which is equivalent to an embedding into the $\L^p$-setting.

\begin{lemma}
\label{lem:embedding_infinity}
Let $1 \leq q < \infty$ and $\alpha > 0$.
Further, let $g: [0,\infty) \to [0,\infty)$ be an increasing function.
Then, for any $c > 1$,
\begin{equation*}
\sup_{t > 0} t^{-\alpha} \, g(t) \leq c^{2\alpha} \, \log(c)^{-\nicefrac{1}{q}} \left(\int_0^\infty \big(t^{-\alpha} \, g(t)\big)^q \: \frac{\d t}{t} \right)^{\nicefrac{1}{q}}.
\end{equation*}
\end{lemma}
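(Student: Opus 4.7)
The plan is to mimic the dyadic decomposition argument that presumably drives the proof of Lemma~\ref{lem:embedding_p}. I would partition $(0,\infty)$ into the geometric pieces $I_k = [c^k, c^{k+1}]$ for $k \in \Z$ and set $a_k = c^{-\alpha k} g(c^k)$. Since $g$ is increasing and $c^{-\alpha(k+1)} \leq t^{-\alpha} \leq c^{-\alpha k}$ on $I_k$, the product satisfies
\begin{equation*}
c^{-\alpha} a_k \leq t^{-\alpha} g(t) \leq c^\alpha a_{k+1}
\quad \mbox{for } t \in I_k.
\end{equation*}

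Integrating the lower bound against $\d t/t$ on $I_k$, using $\int_{I_k} \d t/t = \log(c)$, and summing over $k \in \Z$ yields the discretization
\begin{equation*}
c^{-\alpha q} \log(c) \sum_{k \in \Z} a_k^q \leq \int_0^\infty \big(t^{-\alpha} g(t)\big)^q \: \frac{\d t}{t}.
\end{equation*}
For the supremum side, the upper bound on $I_k$ gives $\sup_{t>0} t^{-\alpha} g(t) = \sup_k \sup_{t \in I_k} t^{-\alpha} g(t) \leq c^\alpha \sup_k a_{k+1}$, and the elementary embedding $\ell^\infty \hookrightarrow \ell^q$ gives $\sup_k a_k \leq \big(\sum_k a_k^q\big)^{1/q}$. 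Chaining these three inequalities produces the prefactor $c^\alpha \cdot c^\alpha \log(c)^{-1/q} = c^{2\alpha} \log(c)^{-1/q}$ in front of the $\L^q(\d t/t)$-norm, which is precisely the claim.

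I do not anticipate any genuine obstacle here: the argument is short and self-contained, and the constant $c^{2\alpha}$ arises naturally because each of the two dyadic comparisons — the passage from $\sup_t$ to $\sup_k$, and the passage from $\ell^\infty$ to $\ell^q$ — loses a factor $c^\alpha$. The resulting form matches Lemma~\ref{lem:embedding_p}, which is presumably why the statement is given this way; a direct comparison of $t_0^{-\alpha} g(t_0)$ against the integral on $[t_0, c t_0]$ can replace $c^{2\alpha}$ by $c^\alpha$, but this sharpening is not needed for the embedding-type applications that will arise later when estimating the Besov semi-norm of $e_L$.
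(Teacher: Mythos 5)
Your proof is correct, but it is not the route the paper takes. You argue directly with the geometric partition $\{[c^k,c^{k+1}]\}_{k\in\Z}$ that also underlies Lemma~\ref{lem:embedding_p}: comparing $\sup_{t>0}t^{-\alpha}g(t)$ with $\sup_k a_k$ for $a_k=c^{-\alpha k}g(c^k)$ costs one factor $c^\alpha$, the bound $\sup_k a_k\le\bigl(\sum_k a_k^q\bigr)^{\nicefrac{1}{q}}$ costs nothing (your label ``$\ell^\infty\hookrightarrow\ell^q$'' has the arrow backwards, and the second factor $c^\alpha$ actually comes from the subsequent comparison of $\log(c)\sum_k a_k^q$ with the integral, not from this step -- but the chain of inequalities you write down is correct and yields exactly $c^{2\alpha}\log(c)^{-\nicefrac{1}{q}}$). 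The paper instead deduces Lemma~\ref{lem:embedding_infinity} from Lemma~\ref{lem:embedding_p} by letting $p\to\infty$: it notes that the right-hand side of Lemma~\ref{lem:embedding_p} tends to $c^{2\alpha}\log(c)^{-\nicefrac{1}{q}}$ times the $\L^q(\nicefrac{\d t}{t})$-quantity, and then shows, via the positive-measure set $D=\{t>0 : t^{-\alpha}g(t)\ge M-\delta\}$, that $\bigl(\int_0^\infty(t^{-\alpha}g(t))^p\,\nicefrac{\d t}{t}\bigr)^{\nicefrac{1}{p}}\to\sup_{t>0}t^{-\alpha}g(t)$ as $p\to\infty$. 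Your argument is shorter and more self-contained, needing no limiting procedure and no discussion of the degenerate cases beyond the trivial one where the integral is infinite; the paper's version buys the reuse of Lemma~\ref{lem:embedding_p} and records the $\L^p$-to-$\L^\infty$ limit along the way. Your closing remark that a one-interval comparison on $[t_0,ct_0]$ sharpens the constant to $c^{\alpha}\log(c)^{-\nicefrac{1}{q}}$ is also correct; the paper does not use this improvement, and the stated constant is all that is needed for the error estimates in Section~4.
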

The proof of this lemma can be found in Appendix B.
The classical estimates are only concerned with fixed $\alpha$ and qualitative estimates  of the  constant   involved in the Besov-norm estimates. However, the asymptotic behaviour for $\alpha \searrow 0$ is needed for a refined analysis in the next section.

\begin{lemma}
\label{lem:limit}
Let $1 \leq q < \infty$ and let $g: [0,\infty) \to [0,\infty)$ be increasing and bounded from above.
Further, assume that there exists some $\sigma \in (0,1)$ such that
\begin{equation*}
\left(\int_0^\infty \big(t^{-\sigma} \, g(t)\big)^q \: \frac{\d t}{t} \right)^{\nicefrac{1}{q}} < \infty.
\end{equation*}
Then,
\begin{equation*}
\lim_{\alpha \searrow 0} \left(\alpha q \int_0^\infty \big(t^{-\alpha} \, g(t)\big)^q \: \frac{\d t}{t} \right)^{\nicefrac{1}{q}} = \lim_{t \to \infty} g(t).
\end{equation*}
\end{lemma}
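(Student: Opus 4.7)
}
Set $G := \lim_{t \to \infty} g(t)$; since $g$ is increasing and bounded, $G = \sup_{t > 0} g(t) < \infty$. The claim is equivalent to showing
\begin{equation*}
\alpha q \int_0^\infty t^{-\alpha q - 1} g(t)^q \: \d t \; \longrightarrow \; G^q
\quad \mbox{ as } \alpha \searrow 0.
\end{equation*}
The guiding intuition is that $\alpha q \, t^{-\alpha q - 1} \, \d t$ is a probability measure on $[1,\infty)$ (its integral is exactly $1$), and as $\alpha \searrow 0$ this measure concentrates its mass at $+\infty$, where $g$ is close to $G$.

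The plan is to split the integral into three pieces and estimate each separately:
\begin{equation*}
\int_0^\infty = \int_0^1 + \int_1^T + \int_T^\infty,
\end{equation*}
where $T \geq 1$ will be chosen below. First I would show that the piece over $(0,1]$ is harmless: restricting to $\alpha < \sigma$, I would use $t^{-\alpha q - 1} \leq t^{-\sigma q - 1}$ for $t \in (0,1]$ together with the hypothesis $\int_0^\infty (t^{-\sigma}g(t))^q\,\d t/t < \infty$ to bound the integral uniformly in $\alpha$, so that multiplication by $\alpha$ sends it to $0$. For the middle piece $\int_1^T$ I would use the crude bound $g(t) \leq G$ and the explicit primitive of $t^{-\alpha q - 1}$ to obtain
\begin{equation*}
\alpha q \int_1^T t^{-\alpha q - 1} g(t)^q \: \d t \leq G^q \bigl(1 - T^{-\alpha q}\bigr) \longrightarrow 0
\quad \mbox{ as } \alpha \searrow 0
\end{equation*}
for each fixed $T$.

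The decisive piece is the tail $\int_T^\infty$. Given $\epsilon > 0$, I would choose $T = T(\epsilon) \geq 1$ so large that $G - \epsilon \leq g(t) \leq G$ for all $t \geq T$, which is possible because $g(t) \to G$. A direct computation then yields
\begin{equation*}
(G-\epsilon)^q \, T^{-\alpha q} \;\leq\; \alpha q \int_T^\infty t^{-\alpha q - 1} g(t)^q \: \d t \;\leq\; G^q \, T^{-\alpha q},
\end{equation*}
and since $T^{-\alpha q} \to 1$ as $\alpha \searrow 0$, letting $\alpha \searrow 0$ gives
\begin{equation*}
(G-\epsilon)^q \leq \liminf_{\alpha \searrow 0} \alpha q \int_0^\infty t^{-\alpha q - 1} g(t)^q \: \d t \leq \limsup_{\alpha \searrow 0} \alpha q \int_0^\infty t^{-\alpha q - 1} g(t)^q \: \d t \leq G^q
\end{equation*}
after combining with the estimates for the other two pieces. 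Taking $\epsilon \searrow 0$ and then the $q$-th root yields the assertion.

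I do not expect a substantive obstacle here: the only delicate point is that the singularity of $t^{-\alpha q - 1}$ at $0$ must be absorbed by $g$, which is exactly what the auxiliary hypothesis on $\sigma$ is designed to guarantee, while the $\sigma$-hypothesis plays no role for the dominant tail. The main qualitative content of the argument is the observation that the normalising factor $\alpha q$ turns $t^{-\alpha q - 1}\d t$ into a family of probability measures on $[1,\infty)$ whose weak limit sits at $+\infty$.
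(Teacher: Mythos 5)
Your proposal is correct and follows essentially the same route as the paper's proof in Appendix C: kill the contribution near the origin using the $\sigma$-integrability after multiplying by $\alpha$, and on the tail where $g$ is within $\epsilon$ of its limit compute $\alpha q \int_T^\infty t^{-\alpha q-1}\,\d t = T^{-\alpha q} \to 1$ to squeeze between $(G-\epsilon)^q$ and $G^q$. The only cosmetic differences are your three-piece split (the paper merges your $(0,1]$ and $[1,T]$ pieces into one, bounded via $t^{(\sigma-\alpha)q} \leq T^{(\sigma-\alpha)q}$ for $\alpha \leq \sigma$) and your use of $g \leq G$ in place of the paper's auxiliary level $l_1 > l$; you should just note, as the paper does, the trivial case $G=0$ (i.e.\ $g \equiv 0$) so that $G-\epsilon \geq 0$ in the lower bound.
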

Again, the proof of this lemma  has been moved to the appendix, see Appendix C.

\section{Approximation error in Besov spaces}

We now turn to estimating the approximation error $e_L = f-f_L$ of FBP reconstructions  in $\L^p$-norms under the assumption that $f \in \L^1(\R^2) \cap \B^{\alpha,p}_q(\R^2)$. We will  discuss the cases $0 < \alpha <1$ and $\alpha >1$ separately.
These estimates are then used for deriving a bound on the total FBP reconstruction error $f -f_L^\delta$ for noisy data $g^\delta$.
  
As already stated, we consider the {\em approximate} filtered back projection (FBP) formula
\begin{equation*}
f_L(x,y) = \frac{1}{2} \, \Back\big(\Fourier^{-1}[A_L(S) \Fourier(\Radon f)(S,\theta)]\big)(x,y)
\quad \mbox{ for } (x,y) \in \R^2
\end{equation*}
with a given low-pass filter
\begin{equation*}
A_L(S) = |S| \, W(\nicefrac{S}{L})
\quad \mbox{ for } S \in \R
\end{equation*}
of finite bandwidth $L > 0$ and with even window function $W \in \L^\infty(\R)$ satisfying
\begin{equation*}
|\cdot| \, W(\cdot) \in \L^1(\R) \cap \L^2(\R).
\end{equation*}

Recall that for target functions $f \in \L^1(\R^2)$ the approximate FBP reconstruction $f_L \in \L^1_\loc(\R^2)$ is defined almost everywhere on $\R^2$ and can be rewritten as
\begin{equation}
\label{eq:FBP_approximate}
f_L = \frac{1}{2} \, \Back\big(\Fourier^{-1} A_L * \Radon f\big) = f * K_L,
\end{equation}
where the convolution kernel $K_L \in \L^2(\R^2)$ is given by
\begin{equation*}
K_L(x,y) = \frac{1}{2} \, \Back \big(\Fourier^{-1} A_L\big)(x,y)
\quad \mbox{ for } (x,y) \in \R^2.
\end{equation*}

\subsection{Error Estimate for $0 < \alpha < 1$}

Several papers, see e.g. \cite{Lucier2019,Pinzon2001}, have argued, that natural images including cross sections of the human body can be modelled by Besov spaces with $\alpha < 1$. This also includes the case of functions which are superpositions of characteristic functions of smooth domains, which serves as a standard model for simulation in tomography.

Hence, we start with analyzing the case $1 \leq p,q \leq \infty$ and $0 < \alpha < 1$.

\begin{theorem}
\label{theo:Besov_small}
Let $f \in \L^1(\R^2) \cap \B^{\alpha,p}_q(\R^2)$ for $1 \leq p,q \leq \infty$ and $0 < \alpha < 1$.
Furthermore, let $W \in \L^\infty(\R)$ be even with $W(0)=1$ such that the corresponding filter $A \equiv A_1$ satisfies $A \in \L^1(\R) \cap \L^2(\R)$ and the convolution kernel $K \equiv K_1$ satisfies $K \in \L^1(\R^2)$ as well as
\begin{equation*}
\int_{\R^2} \|(x,y)\|_{\R^2}^\alpha \, |K(x,y)| \: \d (x,y) < \infty.
\end{equation*}
Then, the $\L^p$-norm of the inherent FBP reconstruction error $e_L = f - f_L$ is bounded above by
\begin{equation*}
\|e_L\|_{\L^p(\R^2)} \leq c_{\alpha,q} \left(\int_{\R^2} \|(x,y)\|_{\R^2}^\alpha \, |K(x,y)| \: \d (x,y)\right) L^{-\alpha} \, |f|_{\B^{\alpha,p}_q(\R^2)},
\end{equation*}
where
\begin{equation*}
c_{\alpha,q} = \cases{
(2 \e \alpha q)^{\nicefrac{1}{q}} & for $1 \leq q < \infty$, \\
1 & for $q = \infty$.}
\end{equation*}
\end{theorem}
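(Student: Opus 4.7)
The plan is to exploit the convolution representation $f_L = f * K_L$ from~(\ref{eq:FBP_approximate}) together with the normalisation $\int_{\R^2} K_L(x,y) \, \d(x,y) = 1$, which follows from $W(0)=1$ via the Fourier slice identity $\Fourier_2 K_L(\xi) = W(|\xi|/L)$ for the back projection operator. This normalisation lets me rewrite the error as
\begin{equation*}
e_L(x,y) = \int_{\R^2} K_L(X,Y) \bigl[f(x,y) - f(x-X,y-Y)\bigr] \d(X,Y),
\end{equation*}
and Minkowski's integral inequality then yields
\begin{equation*}
\|e_L\|_{\L^p(\R^2)} \leq \int_{\R^2} |K_L(X,Y)| \cdot \|f - f(\cdot-X,\cdot-Y)\|_{\L^p(\R^2)} \, \d(X,Y).
\end{equation*}

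Next I would pass to the unscaled kernel via the substitution $(U,V) = L(X,Y)$, using $K_L(X,Y) = L^2 K(LX,LY)$, and then bound the $\L^p$-norm of the translation difference directly by the modulus of smoothness. This turns the estimate into
\begin{equation*}
\|e_L\|_{\L^p(\R^2)} \leq \int_{\R^2} |K(U,V)| \cdot \omega_p\bigl(f,\|(U,V)\|_{\R^2}/L\bigr) \d(U,V).
\end{equation*}

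What remains is a pointwise bound of the form $\omega_p(f,t) \leq c_{\alpha,q} \, t^\alpha \, |f|_{\B^{\alpha,p}_q(\R^2)}$. For $q=\infty$ this is immediate from the definition of the Besov seminorm and gives $c_{\alpha,\infty}=1$. For $1 \leq q < \infty$ I would apply Lemma~\ref{lem:embedding_infinity} to the increasing function $g(t)=\omega_p(f,t)$, which produces
\begin{equation*}
\omega_p(f,t) \leq c^{2\alpha} \, \log(c)^{-1/q} \, t^\alpha \, |f|_{\B^{\alpha,p}_q(\R^2)} \qquad \text{for every } c>1,
\end{equation*}
and then optimise over $c$ by setting $c = \e^{1/(2\alpha q)}$; a direct calculation gives the prefactor $\e^{1/q}(2\alpha q)^{1/q} = (2\e\alpha q)^{1/q}$, which is precisely the constant $c_{\alpha,q}$ announced in the theorem. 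Inserting this pointwise bound under the integral and pulling $L^{-\alpha}$ out in front reproduces the claimed inequality, with the weighted $\alpha$-moment of $|K|$ appearing as a direct consequence of the $\|(U,V)\|^\alpha$ factor.

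The one step that will need genuine care, rather than bookkeeping, is the justification of $\int_{\R^2} K(x,y)\,\d(x,y)=1$ and of the convolution representation itself: this rests on the Fourier slice formula for $\Back$ together with the hypotheses $A \in \L^1(\R) \cap \L^2(\R)$ and $K \in \L^1(\R^2)$, which ensure that the formal Fourier computation $\Fourier_2 K(\xi) = W(|\xi|)$ is rigorous. Minkowski's inequality is standard for $1 \leq p \leq \infty$; no continuity of translations is required for $p=\infty$, since the bound only asks for the quantitative supremum defining $\omega_\infty$.
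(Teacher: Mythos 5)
Your proposal is correct and follows essentially the same route as the paper's proof: the convolution representation with the normalisation $\int_{\R^2} K_L = W(0) = 1$, Minkowski's integral inequality to bring in $\omega_p(f,\cdot)$, the scaling $K_L(x,y)=L^2K(Lx,Ly)$ to pass to the unscaled kernel, and Lemma~\ref{lem:embedding_infinity} applied to the increasing function $\omega_p(f,\cdot)$ with the optimal choice $c=\exp\bigl((2\alpha q)^{-1}\bigr)$ yielding $(2\e\alpha q)^{\nicefrac{1}{q}}$. The only cosmetic difference is that the paper passes to polar coordinates via the radial profile $k$ before pulling out $\sup_{t>0}t^{-\alpha}\omega_p(f,t)$, whereas you apply the pointwise bound on $\omega_p$ directly under the integral; the content is identical.
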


\begin{proof}
First note that due to $f \in \B^{\alpha,p}_q(\R^2) \subset \L^p(\R^2)$ and $K \in \L^1(\R^2)$, we have $K_L \in \L^1(\R^2)$ and
\begin{equation*}
f_L = f * K_L \in \L^p(\R^2)
\quad \forall \, L > 0.
\end{equation*}
Furthermore, $K_L$ and $W$ are related via
\begin{equation*}
\Fourier K_L(x,y) = W\bigg(\frac{\|(x,y)\|_{\R^2}}{L}\bigg)
\quad \mbox{ for } (x,y) \in \R^2
\end{equation*}
so that
\begin{equation*}
\int_{\R^2} K_L(x,y) \: \d (x,y) = \Fourier K_L(0,0) = W(0) = 1.
\end{equation*}
Thus, for $(x,y) \in \R^2$ holds that
\begin{eqnarray*}
f_L(x,y) - f(x,y) & = (f*K_L)(x,y) - f(x,y) \\
& = \int_{\R^2} [f(x-X,y-Y) - f(x,y)] \, K_L(X,Y) \: \d (X,Y).
\end{eqnarray*}
For $p = \infty$ follows that
\begin{eqnarray*}
\fl\|f - f_L\|_{\L^\infty(\R^2)} & \leq \sup_{(x,y) \in \R^2} \int_{\R^2} |f(x-X,y-Y) - f(x,y)| \, |K_L(X,Y)| \: \d (X,Y) \\
& \leq \int_{\R^2} \sup_{(x,y) \in \R^2} |f(x-X,y-Y) - f(x,y)| \, |K_L(X,Y)| \: \d (X,Y).
\end{eqnarray*}
Thus, with the $\L^\infty$-modulus of continuity
\begin{equation*}
\omega_\infty(f,\delta) = \sup_{\|(X,Y)\|_{\R^2} \leq \delta} \sup_{(x,y) \in \R^2} |f(x-X,y-Y) - f(x,y)|
\quad \mbox{ for } \delta > 0
\end{equation*}
we obtain
\begin{equation*}
\|f - f_L\|_{\L^\infty(\R^2)} \leq \int_{\R^2} \omega_\infty(f,\|(X,Y)\|_{\R^2}) \, |K_L(X,Y)| \: \d (X,Y).
\end{equation*}
For $1 \leq p < \infty$ Minkowski's integral inequality gives
\begin{eqnarray*}
\fl\|f - f_L\|_{\L^p(\R^2)} & = \left(\int_{\R^2} \left|\int_{\R^2} [f(x-X,y-Y) - f(x,y)] \, K_L(X,Y) \: \d (X,Y)\right|^p \: \d (x,y)\right)^{\nicefrac{1}{p}} \\
& \leq \int_{\R^2} \left(\int_{\R^2} |f(x-X,y-Y) - f(x,y)|^p \: \d (x,y)\right)^{\nicefrac{1}{p}} |K_L(X,Y)| \: \d(X,Y).
\end{eqnarray*}
Thus, with the $\L^p$-modulus of continuity
\begin{equation*}
\fl\omega_p(f,\delta) = \sup_{\|(X,Y)\|_{\R^2} \leq \delta} \left(\int_{\R^2} |f(x-X,y-Y) - f(x,y)|^p \: \d (x,y)\right)^{\nicefrac{1}{p}}
\quad \mbox{ for } \delta > 0
\end{equation*}
we obtain
\begin{equation*}
\|f - f_L\|_{\L^p(\R^2)} \leq \int_{\R^2} \omega_p(f,\|(X,Y)\|_{\R^2}) \, |K_L(X,Y)| \: \d (X,Y).
\end{equation*}

Consequently, for all $1 \leq p \leq \infty$ we have
\begin{eqnarray*}
\|f - f_L\|_{\L^p(\R^2)} & \leq \int_{\R^2} \omega_p(f,\|(X,Y)\|_{\R^2}) \, |K_L(X,Y)| \: \d (X,Y) \\
& = L^2 \int_{\R^2} \omega_p(f,\|(X,Y)\|_{\R^2}) \, |K(LX,LY)| \: \d (X,Y),
\end{eqnarray*}
where we use the scaling property
\begin{equation*}
K_L(x,y) = L^2 \, K(Lx,Ly)
\quad \forall \, (x,y) \in \R^2.
\end{equation*}
Recall further that the convolution kernel $K$ is radially symmetric, i.e., there exists a univariate function $k:\R \to \R$ such that
\begin{equation*}
K(x,y) = k(\|(x,y)\|_{\R^2})
\quad \forall \, (x,y) \in \R^2.
\end{equation*}
Thus, transforming to polar coordinates gives
\begin{equation*}
\|f - f_L\|_{\L^p(\R^2)} \leq 2\pi \, L^2 \int_0^\infty \omega_p(f,t) \, t \, |k(Lt)| \: \d t.
\end{equation*}
For $q = \infty$ we can conclude that
\begin{eqnarray*}
\|f - f_L\|_{\L^p(\R^2)} & \leq 2\pi \, L^2 \left(\sup_{t>0} t^{-\alpha} \, \omega_p(f,t)\right) \int_0^\infty t^{1+\alpha} \, |k(Lt)| \: \d t \\
& = \left(\int_{\R^2} \|(x,y)\|_{\R^2}^\alpha \, |K(x,y)| \: \d (x,y)\right) L^{-\alpha} \, |f|_{\B^{\alpha,p}_\infty(\R^2)}.
\end{eqnarray*}
Now, let $1 \leq q < \infty$.
Since the $\L^p$-modulus of continuity is monotonically increasing in $\delta > 0$, we can apply Lemma~\ref{lem:embedding_infinity} to obtain
\begin{equation*}
\sup_{t>0} t^{-\alpha} \, \omega_p(f,t) \leq c^{2\alpha} \, \log(c)^{-\nicefrac{1}{q}} \left(\int_0^\infty \big(t^{-\alpha} \, \omega_p(f,t)\Big)^q \: \frac{\d t}{t} \right)^{\nicefrac{1}{q}}
\end{equation*}
for any $c > 1$.
Consequently,
\begin{eqnarray*}
\fl\|f - f_L\|_{\L^p(\R^2)} & \leq 2\pi \, L^2 \left(\sup_{t>0} t^{-\alpha} \, \omega_p(f,t)\right) \int_0^\infty t^{1+\alpha} \, |k(Lt)| \: \d t \\
& \leq c^{2\alpha} \, \log(c)^{-\nicefrac{1}{q}} \left(\int_{\R^2} \|(x,y)\|_{\R^2}^\alpha \, |K(x,y)| \: \d (x,y)\right) L^{-\alpha} \, |f|_{\B^{\alpha,p}_q(\R^2)}.
\end{eqnarray*}
It remains to optimize the constant
\begin{equation*}
C_{\alpha,q}(c) = c^{2\alpha} \, \log(c)^{-\nicefrac{1}{q}}
\quad \mbox{ for } c > 1,
\end{equation*}
which satisfies
\begin{equation*}
C_{\alpha,q}(c) \rightarrow \infty \enspace \mbox{for } c \to 1
\quad \mbox{ and } \quad
C_{\alpha,q}(c) \rightarrow \infty \enspace \mbox{ for } c \to \infty.
\end{equation*}
For $c>1$, we have
\begin{equation*}
C_{\alpha,q}^\prime(c) = \frac{c^{2\alpha-1}\left(2\alpha\log(c) - \frac{1}{q}\right)}{\log(c)^{1+\nicefrac{1}{q}}} = 0
\quad \iff \quad
c = \exp\big((2\alpha q)^{-1}\big)
\end{equation*}
as well as
\begin{eqnarray*}
\fl C_{\alpha,q}^\prime(c) < 0
\quad \forall \, 1 < c < \exp\big((2\alpha q)^{-1}\big),
\qquad C_{\alpha,q}^\prime(c) > 0
\quad \forall \, c > \exp\big((2\alpha q)^{-1}\big).
\end{eqnarray*}
Consequently, the unique minimizer of $C_{\alpha,q}$ on $\R_{>1}$ is given by $c^\ast = \exp\big((2\alpha q)^{-1}\big)$ and
\begin{equation*}
\min_{c>1} C_{\alpha,q}(c) = C_{\alpha,q}(c^\ast) = (2 \e \alpha q)^{\nicefrac{1}{q}}.
\end{equation*}
Hence, for $1 \leq q < \infty$, we have
\begin{equation*}
\fl\|f - f_L\|_{\L^p(\R^2)} \leq (2 \e \alpha q)^{\nicefrac{1}{q}} \left(\int_{\R^2} \|(x,y)\|_{\R^2}^\alpha \, |K(x,y)| \: \d (x,y)\right) L^{-\alpha} \, |f|_{\B^{\alpha,p}_q(\R^2)},
\end{equation*}
which completes the proof.
\end{proof}

Note that for fixed $1 \leq q < \infty$ the constant $c_{\alpha,q}$ in Theorem~\ref{theo:Besov_small} goes to $0$ for $\alpha \searrow 0$ as~$\alpha^{\nicefrac{1}{q}}$.
However, an application of Lemma~\ref{lem:limit} to the $\L^p$-modulus of continuity $\omega_p(f,\cdot)$ shows that in this case the Besov semi-norm $|f|_{\B^{\alpha,p}_q(\R^2)}$ goes to $\infty$ for $\alpha \searrow 0$ as $\alpha^{-\nicefrac{1}{q}}$ and, in particular, we have
\begin{equation*}
\lim_{\alpha \searrow 0} (\alpha q)^{\nicefrac{1}{q}} \, |f|_{\B^{\alpha,p}_q(\R^2)} = \sup_{(X,Y) \in \R^2} \|f(\cdot-X,\cdot-Y) - f\|_{\L^p(\R^2)} \leq 2 \, \|f\|_{\L^p(\R^2)}.
\end{equation*}
Thus, for $\alpha \searrow 0$, the $\L^p$-error estimate in Theorem~\ref{theo:Besov_small} reduces to
\begin{equation*}
\|f - f_L\|_{\L^p(\R^2)} \leq 2(2\e)^{\nicefrac{1}{q}} \|K\|_{\L^1(\R^2)} \, \|f\|_{\L^p(\R^2)}
\end{equation*}
and, for $q \to \infty$, we obtain
\begin{equation*}
\|f - f_L\|_{\L^p(\R^2)} \leq 2 \|K\|_{\L^1(\R^2)} \, \|f\|_{\L^p(\R^2)},
\end{equation*}
which is consistent with simply applying Young's inequality, as for $f \in \L^p(\R^2)$ we have
\begin{equation*}
\|f - f_L\|_{\L^p(\R^2)} \leq \|f\|_{\L^p(\R^2)} + \|K\|_{\L^1(\R^2)} \, \|f\|_{\L^p(\R^2)} \leq 2 \|K\|_{\L^1(\R^2)} \, \|f\|_{\L^p(\R^2)}.
\end{equation*}

\subsection{Error Estimate for $\alpha > 1$}
Convergence results in the general regularization theory for inverse problems typically depend on additional smoothness assumptions on $f$. 
Hence, we now consider the case $\alpha >1$, i.e., functions $f$ which are slightly smoother than sums of characteristic functions.
We assume that $f \in \L^1(\R^2) \cap \B^{\alpha,p}_q(\R^2)$ for $\alpha = n + \theta$ with $n \in \N$, $0 < \theta < 1$ and $1 \leq p,q \leq \infty$.

\begin{theorem}
\label{theo:Besov_large}
Let $f \in \L^1(\R^2) \cap \B^{\alpha,p}_q(\R^2)$ for $\alpha = n + \theta$ with $n \in \N$, $0 < \theta < 1$ and $1 \leq p,q \leq \infty$.
Furthermore, let $W \in \L^\infty(\R)$ be even with $W(0)=1$ such that the corresponding filter $A \equiv A_1$ satisfies $A \in \L^1(\R) \cap \L^2(\R)$ and the convolution kernel $K \equiv K_1$ satisfies $K \in \L^1(\R^2)$ and
\begin{equation*}
\int_{\R^2} x^{j_1} y^{j_2} \, K(x,y) \: \d (x,y) = 0
\quad \forall \, 1 \leq |\boldsymbol{j}| \leq n
\end{equation*}
as well as
\begin{equation*}
\int_{\R^2} \|(x,y)\|_{\R^2}^\alpha \, |K(x,y)| \: \d (x,y) < \infty.
\end{equation*}
Then, the $\L^p$-norm of the inherent FBP reconstruction error $e_L = f - f_L$ is bounded above by
\begin{equation*}
\fl\|e_L\|_{\L^p(\R^2)} \leq c_{\theta,q} \, \frac{\Gamma(\theta+1)}{\Gamma(\alpha+1)} \left(\int_{\R^2} \|(x,y)\|_{\R^2}^\alpha \, |K(x,y)| \: \d (x,y)\right) L^{-\alpha} \, |f|_{\B^{\alpha,p}_q(\R^2)},
\end{equation*}
where
\begin{equation*}
c_{\theta,q} = \cases{
(2 \e \theta q)^{\nicefrac{1}{q}} & for $1 \leq q < \infty$, \\
1 & for $q = \infty$.}
\end{equation*}
\end{theorem}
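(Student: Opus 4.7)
The plan is to reuse the convolution representation $f_L = f * K_L$ from the proof of Theorem~\ref{theo:Besov_small}, but to exploit the extra smoothness $\alpha = n + \theta$ by subtracting from $f(x-X,y-Y) - f(x,y)$ the full Taylor polynomial of $f$ at $(x,y)$ of degree~$n$ before integrating against $K_L$. First I would verify, by the substitution $U = LX$, $V = LY$, that the vanishing-moment hypothesis transfers from $K$ to~$K_L$, namely
\begin{equation*}
\int_{\R^2} X^{j_1} Y^{j_2} K_L(X,Y) \, \d(X,Y) = L^{-|\boldsymbol{j}|} \int_{\R^2} X^{j_1} Y^{j_2} K(X,Y) \, \d(X,Y) = 0
\end{equation*}
for all $1 \leq |\boldsymbol{j}| \leq n$, while $\int K_L = W(0) = 1$ as already computed in the previous proof. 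Hence every intermediate Taylor term of degree $1$ through $n$ integrates to zero against $K_L$ and may be freely subtracted inside the integrand representing $f - f_L$.

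Next I would insert the integral form of the Taylor remainder. Setting $g(s) = f(x-sX,y-sY)$ and using the identity $n\int_0^1 (1-s)^{n-1} \, \d s = 1$ to recentre the top-order derivatives at $(x,y)$, the remainder takes the form
\begin{equation*}
\fl\tilde R_n = n \sum_{|\boldsymbol{j}|=n} \frac{(-X)^{j_1}(-Y)^{j_2}}{j_1! \, j_2!} \int_0^1 (1-s)^{n-1} \bigl[f^{(\boldsymbol{j})}(x-sX,y-sY) - f^{(\boldsymbol{j})}(x,y)\bigr] \, \d s,
\end{equation*}
which is precisely what is needed to bring in the $\L^p$-modulus of smoothness of each $f^{(\boldsymbol{j})}$. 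Minkowski's integral inequality then allows me to take the $\L^p$-norm in $(x,y)$ past the integrals over $(X,Y)$ and $s$, and bounding the inner translation difference by $\omega_p(f^{(\boldsymbol{j})}, s\|(X,Y)\|_{\R^2})$ yields
\begin{equation*}
\fl\|f - f_L\|_{\L^p(\R^2)} \leq n \sum_{|\boldsymbol{j}|=n} \frac{1}{j_1! j_2!} \int_{\R^2} |X|^{j_1} |Y|^{j_2} |K_L(X,Y)| \int_0^1 (1-s)^{n-1} \omega_p(f^{(\boldsymbol{j})}, s\|(X,Y)\|_{\R^2}) \, \d s \, \d(X,Y).
\end{equation*}

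From here I would finish exactly as in Theorem~\ref{theo:Besov_small}, applied to each $f^{(\boldsymbol{j})}$ rather than to $f$ and with the smoothness exponent $\theta$ in place of $\alpha$: for $q = \infty$ use $\omega_p(f^{(\boldsymbol{j})},t) \leq t^\theta |f^{(\boldsymbol{j})}|_{\B^{\theta,p}_\infty(\R^2)}$, and for $1 \leq q < \infty$ apply Lemma~\ref{lem:embedding_infinity} with the same optimisation in $c$ to pick up the factor $(2\e\theta q)^{\nicefrac{1}{q}}$. The $s$-integral contributes the Beta function $\int_0^1 (1-s)^{n-1} s^\theta \, \d s = (n-1)!\,\Gamma(\theta+1)/\Gamma(\alpha+1)$; the crude bound $|X|^{j_1}|Y|^{j_2} \|(X,Y)\|_{\R^2}^\theta \leq \|(X,Y)\|_{\R^2}^\alpha$ together with the scaling $\int \|(X,Y)\|_{\R^2}^\alpha |K_L| \, \d(X,Y) = L^{-\alpha} \int \|(x,y)\|_{\R^2}^\alpha |K| \, \d(x,y)$ already used in the previous proof separates the kernel-dependent constant from the function data; and the factor $n \cdot (n-1)! = n!$ combines with
\begin{equation*}
\sum_{|\boldsymbol{j}|=n} \frac{n!}{j_1! \, j_2!} \, |f^{(\boldsymbol{j})}|_{\B^{\theta,p}_q(\R^2)} = |f|_{\B^{\alpha,p}_q(\R^2)}
\end{equation*}
to reconstruct the Besov semi-norm with the prefactor $\Gamma(\theta+1)/\Gamma(\alpha+1)$ claimed. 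The main obstacle, beyond the multi-index bookkeeping in two variables, is arranging the Taylor remainder as a \emph{difference} of values of $f^{(\boldsymbol{j})}$ rather than as the raw derivative, so that the $\L^p$-modulus of smoothness of the top-order derivatives is the quantity that appears; the identity $n\int_0^1(1-s)^{n-1}\d s = 1$ is precisely what makes this possible without altering the integral, after which all the factorial and multinomial constants collapse exactly into the definition of $|f|_{\B^{\alpha,p}_q(\R^2)}$.
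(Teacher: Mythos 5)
Your core computation coincides with the paper's proof of the smooth case: the same Taylor expansion with integral remainder, the same use of the moment conditions on $K_L$ (via the scaling $L^{-|\boldsymbol{j}|}$) both to delete the intermediate polynomial terms and to recentre the top-order derivatives as differences $f^{(\boldsymbol{j})}(x-sX,y-sY)-f^{(\boldsymbol{j})}(x,y)$, then Minkowski's integral inequality, the bound $\omega_p\bigl(f^{(\boldsymbol{j})},s\|(X,Y)\|_{\R^2}\bigr)\le (s\|(X,Y)\|_{\R^2})^\theta\,\sup_{t>0}t^{-\theta}\omega_p(f^{(\boldsymbol{j})},t)$ together with Lemma~\ref{lem:embedding_infinity} and the optimisation in $c$ for $1\le q<\infty$, the Beta integral $\int_0^1(1-s)^{n-1}s^\theta\,\d s=\Gamma(n)\Gamma(\theta+1)/\Gamma(\alpha+1)$, the crude bound $|X|^{j_1}|Y|^{j_2}\|(X,Y)\|_{\R^2}^\theta\le\|(X,Y)\|_{\R^2}^\alpha$ with the scaling $\int_{\R^2}\|(X,Y)\|_{\R^2}^\alpha|K_L(X,Y)|\,\d(X,Y)=L^{-\alpha}\int_{\R^2}\|(x,y)\|_{\R^2}^\alpha|K(x,y)|\,\d(x,y)$, and the multinomial recombination into $|f|_{\B^{\alpha,p}_q(\R^2)}$. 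All constants come out correctly; the only cosmetic difference is that the paper substitutes to a rescaled kernel $K_{\nicefrac{L}{\tau}}$ where you keep $K_L$ and absorb $s^\theta$ directly, which is equivalent.

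There is, however, a genuine gap: you apply the pointwise Taylor formula with integral remainder directly to $f$, which presupposes that $f$ has classical derivatives up to order $n$ along the relevant segments, so that the identity for $f(x-X,y-Y)-f(x,y)$ holds pointwise (or at least a.e.). A general $f\in\L^1(\R^2)\cap\B^{\alpha,p}_q(\R^2)$ with $\alpha=n+\theta$ only has weak derivatives, and the decomposition on which your whole argument rests is not available for such $f$ without further justification. The paper closes exactly this hole in the second half of its proof: it first establishes the estimate under the additional assumption $f\in\B^{\alpha,p}_q(\R^2)\cap\Cont^\infty(\R^2)$, and then removes it by mollification, setting $f^\varepsilon=f*\varphi^\varepsilon$, using $|f^\varepsilon|_{\B^{\alpha,p}_q(\R^2)}\le|f|_{\B^{\alpha,p}_q(\R^2)}$ and the commutation $f^\varepsilon-f_L^\varepsilon=(f-f_L)*\varphi^\varepsilon$, and passing to the limit $\varepsilon\searrow 0$ in the $\L^p$-norms. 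Your proposal needs this density/regularisation step (or an equivalent argument that the Taylor identity holds a.e.\ for the functions in question) to cover the full hypothesis of the theorem; once it is added, your argument matches the paper's proof.
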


\begin{proof}
To start with, we recall that due to $f \in \B^{\alpha,p}_q(\R^2) \subset \L^p(\R^2)$ and $K \in \L^1(\R^2)$ we have
\begin{equation*}
f_L = f * K_L \in \L^p(\R^2)
\quad \forall \, L > 0.
\end{equation*}
Furthermore, $K_L$ and $W$ are related via
\begin{equation*}
\Fourier K_L(x,y) = W\bigg(\frac{\|(x,y)\|_{\R^2}}{L}\bigg)
\quad \mbox{ for } (x,y) \in \R^2
\end{equation*}
so that
\begin{equation*}
\int_{\R^2} K_L(X,Y) \: \d (X,Y ) = \Fourier K_L(0,0) = W(0) = 1.
\end{equation*}
Hence, for $(x,y) \in \R^2$ follows that
\begin{eqnarray*}
(f_L - f)(x,y) & = (f*K_L)(x,y) - f(x,y) \\
& = \int_{\R^2} [f(x-X,y-Y) - f(x,y)] \, K_L(X,Y) \: \d (X,Y).
\end{eqnarray*}

To continue, we first assume that $f \in \B^{\alpha,p}_q(\R^2) \cap \Cont^\infty(\R^2)$.
Then, Taylor's theorem gives
\begin{eqnarray*}
\fl f(x-X,y-Y) - f(x,y) \\
= \sum_{1 \leq |\boldsymbol{j}| < n} \frac{(-1)^{|\boldsymbol{j}|}}{\boldsymbol{j}!} \, f^{(\boldsymbol{j})}(x,y) \, X^{j_1} Y^{j_2} \\
\quad + \sum_{|\boldsymbol{j}| = n} \frac{n (-1)^n}{\boldsymbol{j}!} \int_0^1 (1-\tau)^{n-1} \, f^{(\boldsymbol{j})}(x-\tau X,y-\tau Y) \, X^{j_1} Y^{j_2} \: \d \tau
\end{eqnarray*}
and, thus, for $(\dagger) := (f_L - f)(x,y)$ follows that
\begin{eqnarray*}
\fl (\dagger) & = \int_{\R^2} \bigg(\sum_{1 \leq |\boldsymbol{j}| < n} \frac{(-1)^{|\boldsymbol{j}|}}{\boldsymbol{j}!} \, f^{(\boldsymbol{j})}(x,y) \, X^{j_1} Y^{j_2}\bigg) \, K_L(X,Y) \: \d (X,Y) \\
\fl & \quad + \int_{\R^2} \bigg(\sum_{|\boldsymbol{j}| = n} \frac{n (-1)^n}{\boldsymbol{j}!} \int_0^1 (1-\tau)^{n-1} \, f^{(\boldsymbol{j})}_{(\tau X,\tau Y)}(x,y) \, X^{j_1} Y^{j_2} \: \d \tau\bigg) \, K_L(X,Y) \: \d (X,Y),
\end{eqnarray*}
where we set $f^{(\boldsymbol{j})}_{(a,b)}(x,y) = f^{(\boldsymbol{j})}(x-a,y-b)$ for $a,b \in \R$, for the sake of brevity.

By using the assumed moment conditions on $K$ and its scaling property
\begin{equation*}
K_L(x,y) = L^2 \, K(Lx,Ly)
\quad \forall \, (x,y) \in \R^2,
\end{equation*}
we have
\begin{equation*}
\fl\int_{\R^2} X^{j_1} Y^{j_2} \, K_L(X,Y) \: \d (X,Y) = L^{-|\boldsymbol{j}|} \int_{\R^2} x^{j_1} y^{j_2} \, K(x,y) \: \d (x,y) = 0
\quad \forall \, 1 \leq |\boldsymbol{j}| \leq n.
\end{equation*}
Consequently, we obtain
\begin{eqnarray*}
\fl (f_L - f)(x,y)\\
\fl \enspace = \sum_{|\boldsymbol{j}| = n} \frac{n (-1)^n}{\boldsymbol{j}!} \int_0^1 (1-\tau)^{n-1} \int_{\R^2} \bigl(f^{(\boldsymbol{j})}_{(\tau X,\tau Y)} - f^{(\boldsymbol{j})}\bigr)(x,y) \, X^{j_1} Y^{j_2} \, K_L(X,Y) \: \d (X,Y) \, \d \tau \\
\fl \enspace = \sum_{|\boldsymbol{j}| = n} \frac{n (-1)^n}{\boldsymbol{j}!} \int_0^1 (1-\tau)^{n-1} \tau^{-n} \int_{\R^2} \bigl(f^{(\boldsymbol{j})}_{(X,Y)} - f^{(\boldsymbol{j})}\bigr)(x,y) \, X^{j_1} Y^{j_2} \, K_{\frac{L}{\tau}}(X,Y) \: \d (X,Y) \, \d \tau.
\end{eqnarray*}
Thus, for the $\L^p$-norm of the inherent FBP reconstruction error $e_L = f - f_L$ follows that
\begin{eqnarray*}
\fl \|e_L\|_{\L^p(\R^2)} \\
\fl \enspace \leq \sum_{|\boldsymbol{j}| = n} \frac{n}{\boldsymbol{j}!} \int_0^1 (1-\tau)^{n-1} \tau^{-n} \int_{\R^2} \omega_p(f^{(\boldsymbol{j})},\|(X,Y)\|_{\R^2}) \|(X,Y)\|_{\R^2}^n |K_{\frac{L}{\tau}}(X,Y)| \: \d (X,Y) \, \d \tau,
\end{eqnarray*}
where, for $1 \leq p < \infty$, we applied Minkowski's integral inequality as in the proof of Theorem~\ref{theo:Besov_small}.

Recall that the convolution kernel $K$ is radially symmetric, i.e., there exists a univariate function $k:\R \to \R$ such that
\begin{equation*}
K(x,y) = k(\|(x,y)\|_{\R^2})
\quad \forall \, (x,y) \in \R^2.
\end{equation*}
Hence, transforming to polar coordinates gives
\begin{eqnarray*}
\fl \|e_L\|_{\L^p(\R^2)} & \leq 2\pi \, L^2 \sum_{|\boldsymbol{j}| = n} \frac{n}{\boldsymbol{j}!} \int_0^1 (1-\tau)^{n-1} \tau^{-n-2} \int_0^\infty \omega_p(f^{(\boldsymbol{j})},t) \, t^{n+1} \, \Big|k\Big(\frac{L}{\tau}t\Big)\Big| \: \d t \, \d \tau \\
\fl & = 2\pi \, L^2 \sum_{|\boldsymbol{j}| = n} \frac{n}{\boldsymbol{j}!} \int_0^1 (1-\tau)^{n-1} \tau^{-n-2} \int_0^\infty t^{-\theta} \, \omega_p(f^{(\boldsymbol{j})},t) \, t^{\alpha+1} \, \Big|k\Big(\frac{L}{\tau}t\Big)\Big| \: \d t \, \d \tau.
\end{eqnarray*}
For $q = \infty$ we can conclude that
\begin{eqnarray*}
\fl \|e_L\|_{\L^p(\R^2)} \\
\fl \enspace \leq 2\pi \, L^2 \sum_{|\boldsymbol{j}| = n} \frac{n}{\boldsymbol{j}!} \left(\sup_{t>0} t^{-\theta} \, \omega_p(f^{(\boldsymbol{j})},t)\right) \left(\int_0^1 (1-\tau)^{n-1} \tau^{-n-2} \int_0^\infty t^{\alpha+1} \, \Big|k\Big(\frac{L}{\tau}t\Big)\Big| \: \d t \, \d \tau\right) \\
\fl \enspace = 2\pi \, L^{-\alpha} \sum_{|\boldsymbol{j}| = n} \frac{n}{\boldsymbol{j}!} \left(\int_0^1 (1-\tau)^{n-1} \tau^\theta \: \d \tau\right) \left(\int_0^\infty t^{\alpha+1} \, |k(t)| \: \d t\right) |f^{(\boldsymbol{j})}|_{\B^{\theta,p}_\infty(\R^2)} \\
\fl \enspace = L^{-\alpha} \sum_{|\boldsymbol{j}| = n} \frac{n}{\boldsymbol{j}!} \frac{\Gamma(n) \Gamma(\theta+1)}{\Gamma(n+\theta+1)} \left(\int_{\R^2} \|(x,y)\|_{\R^2}^\alpha \, |K(x,y)| \: \d (x,y)\right) |f^{(\boldsymbol{j})}|_{\B^{\theta,p}_\infty(\R^2)}
\end{eqnarray*}
and, thus,
\begin{equation*}
\|e_L\|_{\L^p(\R^2)} \leq \frac{\Gamma(\theta+1)}{\Gamma(\alpha+1)} \left(\int_{\R^2} \|(x,y)\|_{\R^2}^\alpha \, |K(x,y)| \: \d (x,y)\right) L^{-\alpha} \, |f|_{\B^{\alpha,p}_\infty(\R^2)}.
\end{equation*}
For $1 \leq q < \infty$ we can apply Lemma~\ref{lem:embedding_infinity} and obtain, as in the proof of Theorem~\ref{theo:Besov_small},
\begin{equation*}
\fl \|e_L\|_{\L^p(\R^2)} \leq (2 \e \theta q)^{\nicefrac{1}{q}} \, \frac{\Gamma(\theta+1)}{\Gamma(\alpha+1)} \left(\int_{\R^2} \|(x,y)\|_{\R^2}^\alpha \, |K(x,y)| \: \d (x,y)\right) L^{-\alpha} \, |f|_{\B^{\alpha,p}_q(\R^2)}.
\end{equation*}

To prove the result also for functions $f \in \B^{\alpha,p}_q(\R^2)$ that are not smooth, let $\varphi \in \Cont_c^\infty(\R^2)$ be a standard mollifier function, i.e., let $\varphi \geq 0$ satisfy $\supp(\varphi) \subseteq B_1(0)$ and
\begin{equation*}
\int_{\R^2} \varphi(x,y) \: \d (x,y) = 1.
\end{equation*}
Moreover, for $\varepsilon > 0$, we define
\begin{equation*}
\varphi^\varepsilon(x,y) = \varepsilon^{-2} \, \varphi\Bigl(\frac{x}{\varepsilon},\frac{y}{\varepsilon}\Bigr)
\quad \mbox{ for } (x,y) \in \R^2
\end{equation*}
and
\begin{equation*}
f^\varepsilon = f * \varphi^\varepsilon.
\end{equation*}
Then, we have $f^\varepsilon \in \B^{\alpha,p}_q(\R^2) \cap \Cont^\infty(\R^2)$ with $|f^\varepsilon|_{\B^{\alpha,p}_q(\R^2)} \leq |f|_{\B^{\alpha,p}_q(\R^2)}$ and
\begin{equation*}
\fl \|f^\varepsilon\|_{\L^p(\R^2)} \rightarrow \|f\|_{\L^p(\R^2)} \enspace \mbox{ for } \varepsilon \searrow 0
\quad \mbox{ as well as } \quad
|f^\varepsilon|_{\B^{\alpha,p}_q(\R^2)} \rightarrow |f|_{\B^{\alpha,p}_q(\R^2)} \enspace \mbox{ for } \varepsilon \searrow 0.
\end{equation*}
Furthermore, we have already proven that
\begin{equation*}
\fl \|f^\varepsilon - f_L^\varepsilon\|_{\L^p(\R^2)} \leq c_{\theta,q} \, \frac{\Gamma(\theta+1)}{\Gamma(\alpha+1)} \left(\int_{\R^2} \|(x,y)\|_{\R^2}^\alpha \, |K(x,y)| \: \d (x,y)\right) L^{-\alpha} \, |f^\varepsilon|_{\B^{\alpha,p}_q(\R^2)}.
\end{equation*}
For all $L > 0$ we now have
\begin{equation*}
f^\varepsilon - f_L^\varepsilon = f * \varphi^\varepsilon - (f * \varphi^\varepsilon) * K_L = (f - f * K_L) * \varphi^\varepsilon = (f - f_L)^\varepsilon,
\end{equation*}
so that
\begin{equation*}
\fl \|f^\varepsilon - f_L^\varepsilon\|_{\L^p(\R^2)} = \|(f - f_L)^\varepsilon\|_{\L^p(\R^2)} \rightarrow \|f - f_L\|_{\L^p(\R^2)} = \|e_L\|_{\L^p(\R^2)}
\enspace \mbox{ for } \varepsilon \searrow 0.
\end{equation*}
Thus, taking the limit $\varepsilon \searrow 0$ gives
\begin{equation*}
\fl \|e_L\|_{\L^p(\R^2)} \leq c_{\theta,q} \, \frac{\Gamma(\theta+1)}{\Gamma(\alpha+1)} \left(\int_{\R^2} \|(x,y)\|_{\R^2}^\alpha \, |K(x,y)| \: \d (x,y)\right) L^{-\alpha} \, |f|_{\B^{\alpha,p}_q(\R^2)}
\end{equation*}
for any $f \in \L^1(\R^2) \cap \B^{\alpha,p}_q(\R^2)$ and the proof is complete.
\end{proof}

\subsection{Error estimates for noisy data}

We now consider the case of noisy Radon data.
To this end, we assume that the Radon data $\Radon f \in \L^p(\R \times [0,\pi))$, $1 \leq p \leq \infty$, is known only up to a noise level $\delta > 0$ so that we have to reconstruct the target function $f$ from given noisy measurements $g^\delta \in \L^p(\R \times [0,\pi))$ satisfying
\begin{equation*}
\|\Radon f - g^\delta\|_{\L^p(\R \times [0,\pi))} \leq \delta.
\end{equation*}
By applying the approximate FBP formula to the noisy data $g^\delta$, we obtain the reconstruction
\begin{equation*}
f_L^\delta = \frac{1}{2} \, \Back \big(\Fourier^{-1} A_L * g^\delta\big)
\end{equation*}
and the overall FBP reconstruction error $e_L^\delta = f - f_L^\delta$ can be split into an approximation error term and a data error term,
\begin{equation*}
e_L^\delta = f - f_L + f_L - f_L^\delta.
\end{equation*}

In the following, we assume that $f$ is supported in a compact set $\Omega \subset \R^2$ and analyse the $\L^p$-norm of the overall FBP reconstruction error $e_L^\delta$ on $\Omega$ with respect to the noise level $\delta$.
By the triangle inequality, we have
\begin{equation*}
\|e_L^\delta\|_{\L^p(\Omega)} \leq \|f - f_L\|_{\L^p(\Omega)} + \|f_L - f_L^\delta\|_{\L^p(\Omega)}
\end{equation*}
and, consequently, we can treat the approximation error and the data error separately.

The analysis of the data error $f_L - f_L^\delta$ is based on the fact that the back projection $\Back$ defines a mapping
\begin{equation*}
\Back: \L^p(\R \times [0,\pi)) \to \L_\loc^p(\R^2).
\end{equation*}
For $p = \infty$, the definition of $\Back $ reveals that
\begin{equation*}
\|\Back g\|_{\L^\infty(\R^2)} \leq \|g\|_{\L^\infty(\R \times [0,\pi))}
\quad \forall \, g \in \L^\infty(\R \times [0,\pi)).
\end{equation*}
The case $1 \leq p < \infty$ is discussed in the following lemma.

\begin{lemma}
\label{lem:back_projection_L_p}
Let $g \in \L^p(\R \times [0,\pi))$ with $1 \leq p < \infty$.
Then, for any compact subset $\Omega \subset \R^2$ we have
\begin{equation*}
\|\Back g\|_{\L^p(\Omega)} \leq \pi^{-\nicefrac{1}{p}} \, \diam(\Omega)^{\nicefrac{1}{p}} \, \|g\|_{\L^p(\R \times [0,\pi))}
\end{equation*}
and, in particular, $\Back g$ satisfies $\Back g \in \L_\loc^p(\R^2)$.
\end{lemma}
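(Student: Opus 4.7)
The plan is to bound $\|\Back g\|_{\L^p(\Omega)}^p$ by transferring the power $p$ inside the defining integral via Jensen's inequality and then exchanging the order of integration in $(x,y)$ and $\theta$. Writing
\begin{equation*}
\Back g(x,y) = \frac{1}{\pi} \int_0^\pi g(x\cos\theta + y\sin\theta, \theta) \: \d\theta,
\end{equation*}
the measure $\frac{1}{\pi}\d\theta$ on $[0,\pi)$ is a probability measure, so Jensen's inequality gives
\begin{equation*}
|\Back g(x,y)|^p \leq \frac{1}{\pi} \int_0^\pi |g(x\cos\theta + y\sin\theta, \theta)|^p \: \d\theta .
\end{equation*}

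Next I integrate over $\Omega$ and swap the order of integration by Fubini--Tonelli to obtain
\begin{equation*}
\|\Back g\|_{\L^p(\Omega)}^p \leq \frac{1}{\pi} \int_0^\pi \int_\Omega |g(x\cos\theta + y\sin\theta, \theta)|^p \: \d(x,y) \, \d\theta.
\end{equation*}
For each fixed $\theta$, I apply the orthogonal change of variables $(x,y) \mapsto (t,s) = (x\cos\theta + y\sin\theta,\, -x\sin\theta + y\cos\theta)$, which is a rotation and hence area-preserving. Denoting by $\Omega_\theta \subset \R^2$ the image of $\Omega$ under this rotation, the inner integral becomes $\int_{\Omega_\theta} |g(t,\theta)|^p \: \d(t,s)$.

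The key geometric observation is that the $s$-cross-section $\{s : (t,s) \in \Omega_\theta\}$ has one-dimensional Lebesgue measure bounded above by $\diam(\Omega_\theta) = \diam(\Omega)$, since rotations preserve diameters. Applying Fubini in $(t,s)$ and this cross-section estimate yields
\begin{equation*}
\int_{\Omega_\theta} |g(t,\theta)|^p \: \d(t,s) \leq \diam(\Omega) \int_\R |g(t,\theta)|^p \: \d t,
\end{equation*}
so combining everything gives
\begin{equation*}
\|\Back g\|_{\L^p(\Omega)}^p \leq \frac{\diam(\Omega)}{\pi} \int_0^\pi \int_\R |g(t,\theta)|^p \: \d t \, \d\theta = \frac{\diam(\Omega)}{\pi} \|g\|_{\L^p(\R \times [0,\pi))}^p,
\end{equation*}
and taking $p$-th roots yields the claimed inequality. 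The local $\L^p$-integrability $\Back g \in \L_\loc^p(\R^2)$ follows immediately by exhausting $\R^2$ with compact sets. The only real subtlety to verify carefully is measurability of $(x,y,\theta) \mapsto g(x\cos\theta + y\sin\theta, \theta)$, which is routine since $(x,y,\theta) \mapsto (x\cos\theta + y\sin\theta,\theta)$ is continuous and $g$ is Borel measurable; the rest is bookkeeping.
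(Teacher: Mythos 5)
Your proof is correct and follows essentially the same route as the paper: your Jensen step with the probability measure $\frac{1}{\pi}\,\d\theta$ is exactly the paper's H\"older estimate in disguise, and the rotation to $(t,s)$-coordinates together with the bound of the $s$-cross-section (i.e.\ the chord $\ell_{t,\theta}\cap\Omega$) by $\diam(\Omega)$ is precisely the paper's argument. No gaps; the measurability remark you add is a harmless extra.
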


\begin{proof}
Let $g \in \L^p(\R \times [0,\pi))$ with $1 \leq p < \infty$.
For any compact subset $\Omega \subset \R^2$ we have
\begin{eqnarray*}
\|\Back g\|_{\L^p(\Omega)}^p & = \int_\Omega |\Back g(x,y)|^p \: \d (x,y) = \int_{\R^2} |\Back g(x,y)|^p \, \chi_\Omega(x,y) \: \d(x,y) \\
& = \frac{1}{\pi^p} \int_{\R^2} \left|\int_0^\pi g(x \cos(\theta) + y \sin(\theta),\theta) \: \d \theta\right|^p \chi_\Omega(x,y) \: \d(x,y).
\end{eqnarray*}
An application of H\"{o}lder's inequality yields
\begin{equation*}
\fl \|\Back g\|_{\L^p(\Omega)}^p \leq \frac{1}{\pi^p} \int_{\R^2} \left(\pi^{1-\nicefrac{1}{p}} \left(\int_0^\pi |g(x \cos(\theta) + y \sin(\theta),\theta)|^p \: \d \theta\right)^{\nicefrac{1}{p}}\right)^p \chi_\Omega(x,y) \: \d(x,y).
\end{equation*}
By using Fubini's theorem for non-negative functions and the transformation
\begin{equation*}
t = x \cos(\theta) + y \sin(\theta)
\quad \mbox{ and } \quad
s = -x \sin(\theta) + y \cos(\theta),
\end{equation*}
i.e., $\d x \, \d y = \d s \, \d t$ and
\begin{equation*}
x = t \cos(\theta) - s \sin(\theta)
\quad \mbox{ and } \quad
y = t \sin(\theta) + s \cos(\theta),
\end{equation*}
we finally obtain
\begin{eqnarray*}
\fl \|\Back g\|_{\L^p(\Omega)}^p & \leq \frac{1}{\pi} \int_0^\pi \int_\R \int_\R |g(t,\theta)|^p \, \chi_\Omega(t \cos(\theta) - s \sin(\theta),t \sin(\theta) + s \cos(\theta)) \: \d s \, \d t \, \d \theta \\
\fl & = \frac{1}{\pi} \int_0^\pi \int_\R |g(t,\theta)|^p \, \bigg(\int_{\ell_{t,\theta}} \chi_\Omega(x,y) \: \d(x,y)\bigg) \: \d t \, \d \theta \\
\fl & \leq \frac{1}{\pi} \, \diam(\Omega) \, \|g\|_{\L^p(\R \times [0,\pi))}^p < \infty.
\end{eqnarray*}
Consequently, $\Back g$ is defined almost everywhere on $\R^2$ and satisfies $\Back g \in \L_\loc^p(\R^2)$.
\end{proof}

We are now prepared to analyse the data error $f_L - f_L^\delta$ in the $\L^p$-norm for target functions $f \in \L^p(\Omega)$ satisfying $\Radon f \in \L^p(\R \times [0,\pi))$, where
\begin{equation*}
f_L = \frac{1}{2} \, \Back \big(q_L * \Radon f\big)
\quad \mbox{ and } \quad
f_L^\delta = \frac{1}{2} \, \Back \big(q_L * g^\delta\big)
\end{equation*}
with noisy measurements $g^\delta \in \L^p(\R \times [0,\pi))$.

\begin{theorem}[Data error]
\label{theo:data_error}
For compact domain $\Omega \subset \R^2$ and $1 \leq p \leq \infty$ let $f \in \L^p(\Omega)$ satisfy $\Radon f \in \L^p(\R \times [0,\pi))$.
Furthermore, let $W \in \L^\infty(\R)$ be even such that the corresponding filter $A \equiv A_1$ satisfies $A \in \L^1(\R) \cap \L^2(\R)$ as well as $\Fourier^{-1} A \in \L^1(\R)$.
Finally, for $\delta > 0$, let $g^\delta \in \L^p(\R \times [0,\pi))$ be given with
\begin{equation*}
\|\Radon f - g^\delta\|_{\L^p(\R \times [0,\pi))} \leq \delta.
\end{equation*}
Then, the $\L^p$-norm of the data error $f_L - f_L^\delta$ on $\Omega$ is bounded above by
\begin{equation*}
\|f_L - f_L^\delta\|_{\L^p(\Omega)} \leq \frac{1}{2\pi^{\nicefrac{1}{p}}} \, \diam(\Omega)^{\nicefrac{1}{p}} \, \|\Fourier^{-1} A\|_{\L^1(\R)} \, L \, \delta.
\end{equation*}
\end{theorem}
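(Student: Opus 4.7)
The plan is to express the data error as a back projection of a convolution of $\Fourier^{-1} A_L$ with the Radon data error, and then to peel off the norms one at a time: first back projection via Lemma~\ref{lem:back_projection_L_p}, then the convolution via Young's inequality, and finally account for the $L$-dependence through the scaling of the filter.

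\textbf{Step 1: Rewrite the difference as a back projection of a convolution.} By linearity of $\Back$ and of the convolution, we have
\begin{equation*}
f_L - f_L^\delta = \frac{1}{2} \, \Back\bigl(q_L * (\Radon f - g^\delta)\bigr),
\end{equation*}
where $q_L = \Fourier^{-1} A_L$ and the convolution is understood in the first variable $t$, with the angular variable $\theta$ acting as a parameter.

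\textbf{Step 2: Control the back projection in $\L^p(\Omega)$.} For $1 \leq p < \infty$ Lemma~\ref{lem:back_projection_L_p} gives
\begin{equation*}
\|\Back h\|_{\L^p(\Omega)} \leq \pi^{-\nicefrac{1}{p}} \, \diam(\Omega)^{\nicefrac{1}{p}} \, \|h\|_{\L^p(\R \times [0,\pi))},
\end{equation*}
and for $p = \infty$ the inequality $\|\Back h\|_{\L^\infty(\R^2)} \leq \|h\|_{\L^\infty(\R \times [0,\pi))}$ already noted in the text provides exactly the same bound, since $\pi^{-\nicefrac{1}{p}} \diam(\Omega)^{\nicefrac{1}{p}} \to 1$ for $p \to \infty$. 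I will apply this to $h = q_L * (\Radon f - g^\delta)$.

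\textbf{Step 3: Young's inequality for the convolution in $t$.} For each fixed $\theta$, Young's convolution inequality on $\R$ yields
\begin{equation*}
\|(q_L * h(\cdot,\theta))\|_{\L^p(\R)} \leq \|q_L\|_{\L^1(\R)} \, \|h(\cdot,\theta)\|_{\L^p(\R)}.
\end{equation*}
Raising to the $p$-th power (or taking the supremum in $\theta$ for $p=\infty$) and integrating over $\theta \in [0,\pi)$ produces
\begin{equation*}
\|q_L * h\|_{\L^p(\R\times[0,\pi))} \leq \|q_L\|_{\L^1(\R)} \, \|h\|_{\L^p(\R\times[0,\pi))}.
\end{equation*}
Combined with the noise bound $\|\Radon f - g^\delta\|_{\L^p(\R\times[0,\pi))} \leq \delta$ this delivers a factor of $\|q_L\|_{\L^1(\R)} \, \delta$.

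\textbf{Step 4: Compute $\|q_L\|_{\L^1(\R)}$ via scaling.} Since $A_L(S) = |S| W(S/L) = L \, A(S/L)$, the standard dilation rule for the inverse Fourier transform gives $q_L(t) = L^2 \, (\Fourier^{-1} A)(Lt)$, so a change of variables yields $\|q_L\|_{\L^1(\R)} = L \, \|\Fourier^{-1} A\|_{\L^1(\R)}$. Assembling Steps 2–4 produces the stated bound with the factor $\tfrac{1}{2}$ coming from the $\tfrac{1}{2}$ in the FBP formula. No step is a serious obstacle; the only mild care needed is the separate treatment of $p=\infty$ in Step 2 and the correct tracking of the $L$-factor from the scaling of $\Fourier^{-1}A_L$ in Step 4, which is exactly where the hypothesis $\Fourier^{-1} A \in \L^1(\R)$ gets used.
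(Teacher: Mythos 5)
Your proposal is correct and follows essentially the same route as the paper's proof: the same decomposition $f_L - f_L^\delta = \tfrac{1}{2}\Back\bigl(\Fourier^{-1}A_L * (\Radon f - g^\delta)\bigr)$, the bound via Lemma~\ref{lem:back_projection_L_p} (with the $\L^\infty$-estimate for $p=\infty$), angle-wise Young's inequality, and the scaling identity $\Fourier^{-1}A_L(t) = L^2\,\Fourier^{-1}A(Lt)$ giving $\|\Fourier^{-1}A_L\|_{\L^1(\R)} = L\,\|\Fourier^{-1}A\|_{\L^1(\R)}$. No gaps.
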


\begin{proof}
We first consider the case $1 \leq p < \infty$.
By the linearity of the back projection $\Back$ and Lemma~\ref{lem:back_projection_L_p} we obtain
\begin{eqnarray*}
\|f_L - f_L^\delta\|_{\L^p(\Omega)} & = \frac{1}{2} \, \|\Back \big(\Fourier^{-1} A_L * (\Radon f - g^\delta)\big)\|_{\L^p(\Omega)} \\
& \leq \frac{1}{2\pi^{\nicefrac{1}{p}}} \, \diam(\Omega)^{\nicefrac{1}{p}} \, \|\Fourier^{-1} A_L * e_\delta\|_{\L^p(\R \times [0,\pi))},
\end{eqnarray*}
where we set $e_\delta = \Radon f - g^\delta$.
By definition of the low-pass filter $A_L$, for $t \in \R$ we have
\begin{eqnarray*}
\fl \Fourier^{-1} A_L(t) & = \frac{1}{2\pi} \int_\R |S| \, W(\nicefrac{S}{L}) \, \e^{\i S t} \: \d S = \frac{L^2}{2\pi} \int_\R |S| \, W(S) \, \e^{\i L S t} \: \d S = L^2 \, \Fourier^{-1} A(Lt)
\end{eqnarray*}
so that
\begin{equation*}
\|\Fourier^{-1} A_L\|_{\L^1(\R)} = L^2 \int_\R |\Fourier^{-1} A(Lt)| \: \d t = L \, \|\Fourier^{-1} A\|_{\L^1(\R)}.
\end{equation*}
Consequently, an application of Young's inequality gives
\begin{eqnarray*}
\fl \|\Fourier^{-1} A_L * e_\delta\|_{\L^p(\R \times [0,\pi))}^p & = \int_0^\pi \|\Fourier^{-1} A_L * e_\delta(\cdot,\theta)\|_{\L^p(\R)}^p \: \d \theta \\
& \leq \int_0^\pi \|\Fourier^{-1} A_L\|_{\L^1(\R)}^p \, \|e_\delta(\cdot,\theta)\|_{\L^p(\R)}^p \: \d \theta \\
& = L^p \, \|\Fourier^{-1} A\|_{\L^1(\R)}^p \, \|e_\delta\|_{\L^p(\R \times [0,\pi)}^p \leq L^p \, \|\Fourier^{-1} A\|_{\L^1(\R)}^p \, \delta^p.
\end{eqnarray*}
By combining the estimates we can conclude that
\begin{equation*}
\|f_L - f_L^\delta\|_{\L^p(\Omega)} \leq \frac{1}{2\pi^{\nicefrac{1}{p}}} \, \diam(\Omega)^{\nicefrac{1}{p}} \, \|\Fourier^{-1} A\|_{\L^1(\R)} \, L \, \delta.
\end{equation*}
Now, let $p = \infty$.
Following along the lines from before, we obtain
\begin{eqnarray*}
\fl \|f_L - f_L^\delta\|_{\L^\infty(\Omega)} & \leq \frac{1}{2} \, \|\Fourier^{-1} A_L * e_\delta\|_{\L^\infty(\R \times [0,\pi))} \leq \frac{1}{2} \, L \, \|\Fourier^{-1} A\|_{\L^1(\R)} \, \|e_\delta\|_{\L^\infty(\R \times [0,\pi))} \\
& \leq \frac{1}{2} \, \|\Fourier^{-1} A\|_{\L^1(\R)} \, L \, \delta
\end{eqnarray*}
and the proof is complete.
\end{proof}

By combining the above result for the data error with our previous findings for the approximation error we can now estimate the $\L^p$-norm of the overall FBP reconstruction error $e_L^\delta = f - f_L^\delta$ via
\begin{equation*}
\|e_L^\delta\|_{\L^p(\Omega)} \leq \|f - f_L\|_{\L^p(\Omega)} + \|f_L - f_L^\delta\|_{\L^p(\Omega)}.
\end{equation*}

\begin{corollary}[Convergence rates for noisy data]
\label{cor:Lp_convergence_rates_noisy}
Let $f \in \L^1(\R^2) \cap \B^{\alpha,p}_q(\R^2)$ for $\alpha = n + \theta$ with $n \in \N$, $0 < \theta < 1$ and $1 \leq p,q \leq \infty$ be supported in a compact domain $\Omega \subset \R^2$.
Furthermore, let $W \in \L^\infty(\R)$ be even with $W(0)=1$ such that the corresponding filter $A \equiv A_1$ satisfies $A \in \L^1(\R) \cap \L^2(\R)$ as well as $\Fourier^{-1} A \in \L^1(\R)$ and the convolution kernel $K \equiv K_1$ satisfies $K \in \L^1(\R^2)$ and
\begin{equation*}
\int_{\R^2} x^{j_1} y^{j_2} \, K(x,y) \: \d (x,y) = 0
\quad \forall \, 1 \leq |\boldsymbol{j}| \leq n
\end{equation*}
as well as
\begin{equation*}
\int_{\R^2} \|(x,y)\|_{\R^2}^\alpha \, |K(x,y)| \: \d (x,y) < \infty.
\end{equation*}
Finally, let $g^\delta \in \L^p(\R \times [0,\pi))$ be given with
\begin{equation*}
\|\Radon f - g^\delta\|_{\L^p(\R \times [0,\pi))} \leq \delta.
\end{equation*}
Then, the $\L^p$-norm of the overall FBP reconstruction error $e_L^\delta = f - f_L^\delta$ on $\Omega$ is bounded above by
\begin{equation*}
\|f - f_L^\delta\|_{\L^p(\Omega)} \leq (c_{W,\alpha,q} + c_{W,\Omega,p}) \, |f|_{\B^{\alpha,p}_q(\R^2)}^{\frac{1}{\alpha+1}} \, \delta^{\frac{\alpha}{\alpha+1}},
\end{equation*}
where the bandwidth $L$ is chosen as
\begin{equation*}
L = \delta^{-\frac{1}{\alpha+1}} \, |f|_{\B^{\alpha,p}_q(\R^2)}^{\frac{1}{\alpha+1}}
\end{equation*}
and the involved constants are given by
\begin{equation*}
c_{W,\Omega,p} = \frac{1}{2\pi^{\nicefrac{1}{p}}} \, \diam(\Omega)^{\nicefrac{1}{p}} \, \|\Fourier^{-1} A\|_{\L^1(\R)}
\end{equation*}
and
\begin{equation*}
c_{W,\alpha,q} = c_{\theta,q} \, \frac{\Gamma(\theta+1)}{\Gamma(\alpha+1)} \left(\int_{\R^2} \|(x,y)\|_{\R^2}^\alpha \, |K(x,y)| \: \d (x,y)\right)
\end{equation*}
with
\begin{equation*}
c_{\theta,q} = \cases{
(2 \e \theta q)^{\nicefrac{1}{q}} & for $1 \leq q < \infty$, \\
1 & for $q = \infty$.}
\end{equation*}
In particular,
\begin{equation*}
\|f - f_L^\delta\|_{\L^p(\Omega)} = \Or\bigl(\delta^{\frac{\alpha}{\alpha+1}}\bigr)
\quad \mbox{ for } \quad
\delta \searrow 0.
\end{equation*}
\end{corollary}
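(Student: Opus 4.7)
My plan is to simply combine the three earlier ingredients: the triangle inequality splitting of $e_L^\delta$, the Besov approximation estimate from Theorem~\ref{theo:Besov_large}, and the data error estimate from Theorem~\ref{theo:data_error}, and then optimize the free parameter $L$. The hypotheses of the corollary are tailored so that both theorems apply verbatim, and the smoothness restriction $\alpha \notin \N$ enters only through the use of Theorem~\ref{theo:Besov_large}.

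Concretely, I would first observe that since $\Omega \subset \R^2$, we have $\|f - f_L\|_{\L^p(\Omega)} \le \|f - f_L\|_{\L^p(\R^2)}$, so Theorem~\ref{theo:Besov_large} yields
\begin{equation*}
\|f - f_L\|_{\L^p(\Omega)} \le c_{W,\alpha,q} \, L^{-\alpha} \, |f|_{\B^{\alpha,p}_q(\R^2)}
\end{equation*}
with the constant $c_{W,\alpha,q}$ stated in the corollary. Next, Theorem~\ref{theo:data_error} gives
\begin{equation*}
\|f_L - f_L^\delta\|_{\L^p(\Omega)} \le c_{W,\Omega,p} \, L \, \delta,
\end{equation*}
and the triangle inequality combines these into
\begin{equation*}
\|f - f_L^\delta\|_{\L^p(\Omega)} \le c_{W,\alpha,q} \, L^{-\alpha} \, |f|_{\B^{\alpha,p}_q(\R^2)} + c_{W,\Omega,p} \, L \, \delta.
\end{equation*}

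Finally, I would balance the two contributions by choosing $L$ so that $L^{-\alpha} |f|_{\B^{\alpha,p}_q(\R^2)}$ and $L\,\delta$ are of the same order, i.e.\ $L^{\alpha+1} = |f|_{\B^{\alpha,p}_q(\R^2)}/\delta$, giving the prescribed $L = \delta^{-1/(\alpha+1)} \, |f|_{\B^{\alpha,p}_q(\R^2)}^{1/(\alpha+1)}$. Plugging this back in, both terms become exactly $|f|_{\B^{\alpha,p}_q(\R^2)}^{1/(\alpha+1)} \, \delta^{\alpha/(\alpha+1)}$, so the sum is $(c_{W,\alpha,q} + c_{W,\Omega,p}) \, |f|_{\B^{\alpha,p}_q(\R^2)}^{1/(\alpha+1)} \, \delta^{\alpha/(\alpha+1)}$, proving the claimed estimate and the $\Or(\delta^{\alpha/(\alpha+1)})$ asymptotic.

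There is no real obstacle here; the corollary is a standard regularization-theoretic bias/variance balance argument. The only points that require a moment of care are verifying that the hypotheses on $W$, $A$ and $K$ in the corollary indeed cover the assumptions of both Theorem~\ref{theo:Besov_large} (window evenness, $W(0)=1$, moment and decay conditions on $K$) and Theorem~\ref{theo:data_error} (additionally $\Fourier^{-1} A \in \L^1(\R)$), and noting that $f \in \L^1(\R^2) \cap \B^{\alpha,p}_q(\R^2)$ supported in the compact $\Omega$ automatically satisfies $\Radon f \in \L^p(\R \times [0,\pi))$ needed for Theorem~\ref{theo:data_error}, so the two bounds can be added without further qualification.
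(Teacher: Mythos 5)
Your proposal is correct and follows essentially the same route as the paper: bound $\|f-f_L\|_{\L^p(\Omega)}$ by the $\R^2$-norm and apply Theorem~\ref{theo:Besov_large}, bound the data error by Theorem~\ref{theo:data_error}, add the two via the triangle inequality, and balance with $L = \delta^{-\frac{1}{\alpha+1}}\,|f|_{\B^{\alpha,p}_q(\R^2)}^{\frac{1}{\alpha+1}}$. The only cosmetic difference is your explicit remark on verifying the hypotheses (in particular $\Radon f \in \L^p(\R\times[0,\pi))$, which is already implicit in the noise condition), which the paper leaves unstated.
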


\begin{proof}
According to Theorem~\ref{theo:Besov_large} the $\L^p$-norm of the approximation error $f - f_L$ is bounded above by
\begin{equation*}
\|f - f_L\|_{\L^p(\Omega)} \leq \|f - f_L\|_{\L^p(\R^2)} \leq c_{W,\alpha,q} \, L^{-\alpha} \, |f|_{\B^{\alpha,p}_q(\R^2)}
\end{equation*}
and by Theorem~\ref{theo:data_error} the $\L^p$-norm of the data error $f_L - f_L^\delta$ can be estimated in terms of the noise level $\delta$ via
\begin{equation*}
\|f_L - f_L^\delta\|_{\L^p(\Omega)} \leq c_{W,\Omega,p} \, L \, \delta.
\end{equation*}

Thus, the $\L^p$-norm of the overall FBP reconstruction error $f - f_L^\delta$ can be bounded above by
\begin{equation*}
\|f - f_L^\delta\|_{\L^p(\Omega)} \leq c_{W,\alpha,q} \, L^{-\alpha} \, |f|_{\B^{\alpha,p}_q(\R^2)} + c_{W,\Omega,p} \, L \, \delta
\end{equation*}
and choosing the bandwidth $L$ as
\begin{equation*}
L = \delta^{-\frac{1}{\alpha+1}} \, |f|_{\B^{\alpha,p}_q(\R^2)}^{\frac{1}{\alpha+1}},
\end{equation*}
we obtain
\begin{equation*}
\|f - f_L^\delta\|_{\L^p(\Omega)} \leq (c_{W,\alpha,q} + c_{W,\Omega,p}) \, |f|_{\B^{\alpha,p}_q(\R^2)}^{\frac{1}{\alpha+1}} \, \delta^{\frac{\alpha}{\alpha+1}},
\end{equation*}
as stated.
\end{proof}

Note that the decay rate of the $\L^p$-error bound in Corollary~\ref{cor:Lp_convergence_rates_noisy} is independent of $1 \leq p \leq \infty$, as
\begin{equation*}
\|f - f_L^\delta\|_{\L^p(\Omega)} = \Or\bigl(\delta^{\frac{\alpha}{\alpha + 1}}\bigr)
\quad \mbox{ for } \quad
\delta \searrow 0,
\end{equation*}
where the filter's bandwidth $L > 0$ has to go to $\infty$ as the noise level $\delta > 0$ goes to $0$ with rate
\begin{equation*}
L = \Or\bigl(\delta^{-\frac{1}{\alpha+1}}\bigr)
\quad \mbox{ for } \quad
\delta \searrow 0.
\end{equation*}

To close this section we give an example of a filter function $A_{L,\nu}$ depending on a parameter $\nu\in\N_0$, which fulfils the assumptions of Theorems~\ref{theo:Besov_small},~\ref{theo:Besov_large}~and~\ref{theo:data_error} for suitable $\nu$.
We remark that these assumptions especially imply continuity of the window $W$ on $\R$ so that they are not satisfied for the typical choices summarized in Table~\ref{tab:lpf}.

\begin{example}
\label{ex:smooth_filter}
We define the {\em smooth filter of order} $\nu \in \N_0$ as $A_{L,\nu} = |\cdot| \, W_\nu(\nicefrac{\cdot}{L})$ with
\begin{equation*} 
W_{\nu}(S) = \cases{ (1-|S|^2)^{\nu} & for $|S| \leq 1$, \\
0 & for $|S|\, > 1$. } 
\end{equation*}
It is easy to verify that $W_{\nu}\in \L^\infty (\R)$ is an even function with $W(0) = 1$ and $A_{L, \nu} \in \L^1(\R) \cap \L^2(\R)$ for all $\nu \in \N_0$ and $L>0$.
In the following, we analyse the inverse Fourier transform of $A_{L,\nu}$ and the corresponding convolution kernel $K_{L,\nu}$.

The inverse Fourier transform of $A_{L,\nu}$ involves the generalized hypergeometric function ${}_1F_2$ and is given by
\begin{equation*}
\Fourier^{-1} A_{L,\nu}(s) = \frac{L^2}{2\pi}\, \B(\nu+1, 1)\, {}_1F_2(1;1/2,\nu+2;\nicefrac{-L^2 s^2}{4}) \quad \mbox{ for } s\in\R.
\end{equation*}
It can be verified that $\Fourier^{-1} A_{L,\nu} \in \L^1(\R)$ for all $\nu \in \N$, but $\Fourier^{-1} A_{L,\nu} \not\in \L^1(\R)$ for $\nu = 0$.
The corresponding convolution kernel $K_{L,\nu}$ can be computed as
\begin{equation*}
K_{L,\nu}(x,y) = \cases{ \frac{L^2}{4\pi\, (\nu +1)} & for $\|(x,y)\|_{\R^2} = 0$, \\ 
\frac{L^2}{2\pi} \, 2^\nu \, \Gamma (\nu+1)\,  \frac{J_{\nu+1}(L \, \|(x,y)\|_{\R^2})}{(L \, \|(x,y)\|_{\R^2})^{\nu +1}} & for $\|(x,y)\|_{\R^2} > 0$,}
\end{equation*}
where $J_{\nu+1}$ denotes the Bessel function of the first kind of order $\nu+1$ defined by
\begin{equation*}
J_{\nu+1}(t) = \frac{1}{\pi} \int_0^\pi \cos(t\sin(\varphi) - (\nu+1)\, \varphi) \:\d\varphi 
\quad \mbox{ for } t \in \R. 
\end{equation*}
For fixed $\alpha\geq 0$ we now develop a condition on $\nu \in \N_0$ such that the integral 
\begin{equation*}
I_{\alpha,\nu} = \int_{\R^2} \|(x,y)\|_{\R^2}^\alpha \, |K_{\nu}(x,y)| \: \d (x,y)
\end{equation*}
is finite, where we set $K_\nu \equiv K_{1,\nu}$ for the sake of brevity.
To this end, first observe that 
\begin{equation*}
I_{\alpha,\nu} = 2^{\nu} \, \Gamma (\nu+1)\, \int_0^\infty \frac{|J_{\nu+1}(r)|}{r^{\nu - \alpha}} \: \d r.
\end{equation*}
Since $|J_{\nu+1}(r)| \, r^{\alpha-\nu}$ is bounded on $[0,\eta]$ for all $\eta > 0$ due to \cite[9.1.7 \& 9.1.60]{Abramowitz1972}, we have
\begin{equation*}
\int_{\R^2} \|(x,y)\|_{\R^2}^\alpha \, |K_{\nu}(x,y)| \: \d (x,y) 
\leq c_\eta + 2^{\nu} \, \Gamma (\nu+1)\, \int_\eta^\infty \frac{|J_{\nu+1}(r)|}{r^{\nu - \alpha}} \: \d r.
\end{equation*} 
According to~\cite[9.2.1]{Abramowitz1972}, the Bessel function $J_{\nu+1}$ may be written as
\begin{equation*}
J_{\nu+1}(t)= \sqrt{\frac{2}{\pi t}} \left(\cos(t-\nicefrac{1}{2}\,\pi\,\nu-\nicefrac{3}{4}\,\pi)+\mathcal{O}(|t|^{-1})\right)
\quad \mbox{ for } \quad
|t| \to \infty.
\end{equation*}
Therefore, choosing $\eta$ sufficiently large yields 
\begin{equation*}
\fl \int_{\R^2} \|(x,y)\|_{\R^2}^\alpha \, |K_{\nu}(x,y)| \: \d (x,y) 
\leq C_\eta + \frac{2^{\nu+\nicefrac{1}{2}}}{\sqrt{\pi}} \,\Gamma (\nu+1)\, \int_\eta^\infty \frac{|\cos(r-\nicefrac{1}{2}\,\pi\,\nu-\nicefrac{3}{4}\,\pi)|}{r^{\nu - \alpha + \nicefrac{1}{2}}} \: \d r
\end{equation*}
and the latter integral converges for $\nu > \alpha + \nicefrac{1}{2}$.

We now prove divergence of $I_{\alpha,\nu}$ for $\nu \leq \alpha + \nicefrac{1}{2}$.
For sufficiently large $N \in \N$ we have
\begin{equation*}
\fl \int_{\R^2} \|(x,y)\|_{\R^2}^\alpha \, |K_{\nu}(x,y)| \: \d (x,y) \geq \frac{2^{\nu+\nicefrac{1}{2}}}{\sqrt{\pi}} \,\Gamma (\nu+1)\, \int_{\pi N}^\infty \frac{\cos^2(r-\nicefrac{1}{2}\,\pi\,\nu-\nicefrac{3}{4}\,\pi)}{r^{\nu - \alpha + \nicefrac{1}{2}}} \: \d r.
\end{equation*}
If $\alpha - \nicefrac{1}{2} < \nu \leq \alpha + \nicefrac{1}{2}$, we obtain 
\begin{equation*}
\fl \int_{\R^2} \|(x,y)\|_{\R^2}^\alpha \, |K_{\nu}(x,y)| \: \d (x,y) 
\geq 2^{\nu-\nicefrac{1}{2}}\,\sqrt{\pi} \, \Gamma (\nu+1)\,  \sum_{n=N}^\infty ((n+1)\pi)^{-\nu+\alpha-\nicefrac{1}{2}},
\end{equation*}
which diverges as we have $\nu \leq \alpha + \nicefrac{1}{2} \: \Leftrightarrow \: -\nu+\alpha-\nicefrac{1}{2} \geq -1$.
If $\nu \leq \alpha - \nicefrac{1}{2}$, we obtain
\begin{equation*}
\int_{\R^2} \|(x,y)\|_{\R^2}^\alpha \, |K_{\nu}(x,y)| \: \d (x,y) 
\geq 2^{\nu-\nicefrac{1}{2}}\,\sqrt{\pi} \, \Gamma (\nu+1)\,  \sum\limits_{n=N}^\infty (n\,\pi)^{-\nu+\alpha-\nicefrac{1}{2}},
\end{equation*}
which diverges as well because $\nu \leq \alpha - \nicefrac{1}{2} \: \Leftrightarrow \: -\nu+\alpha-\nicefrac{1}{2} \geq 0$.
In summary, we have
\begin{equation*}
\int_{\R^2} \|(x,y)\|_{\R^2}^\alpha \, |K_{\nu}(x,y)| \: \d (x,y) < \infty
\quad \iff \quad
\nu > \alpha + \nicefrac{1}{2}.
\end{equation*}
In particular, we have proven that $K_\nu \in \L^1(\R^2)$ if and only if $\nu > \nicefrac{1}{2}$.
It remains to determine the maximal $n \in \N$ such that
\begin{equation*}
\int_{\R^2} x^{j_1} y^{j_2} \, K_\nu(x,y) \: \d (x,y) = 0
\quad \forall \, 1 \leq |\boldsymbol{j}| \leq n.
\end{equation*}
By transforming to polar coordinates the above integral can be rewritten as 
\begin{equation*}
\fl \int_{\R^2} x^{j_1} y^{j_2} \, K_\nu(x,y) \: \d (x,y) = \frac{1}{2\pi} \, 2^\nu \,  \Gamma (\nu+1)\int_0^{2\pi} \cos^{j_1}(\varphi)\, \sin^{j_2}(\varphi) \: \d\varphi \, \int_0^\infty  \frac{J_{\nu+1}(r)}{r^{\nu - |\boldsymbol{j}|}} \:\d r.
\end{equation*}
For $\nu > n - \nicefrac{1}{2}$ we have
\begin{equation*}
\int_0^\infty \frac{J_{\nu+1}(r)}{r^{\nu - |\boldsymbol{j}|}} \:\d r \neq 0
\quad \forall \, 1 \leq |\boldsymbol{j}| \leq n
\end{equation*}
and, moreover,
\begin{equation*}
\int_0^{2\pi} \cos^{j_1}(\varphi)\, \sin^{j_2}(\varphi)\:\d \varphi = 0
\quad \forall \, 1 \leq |\boldsymbol{j}| \leq n
\quad \iff \quad
n = 1.
\end{equation*}
Consequently, the smooth filter of order $\nu \in \N$ satisfies the moment conditions
\begin{equation*}
\int_{\R^2} x^{j_1} y^{j_2} \, K_\nu(x,y) \: \d (x,y) = 0
\quad \forall \, 1 \leq |\boldsymbol{j}| \leq n.
\end{equation*}
only for $n = 1$.
\end{example}

Summarizing the results of Example~\ref{ex:smooth_filter}, the smooth filter of order $\nu \in \N_0$ satisfies the assumptions of our error theory in Theorems~\ref{theo:Besov_small},~\ref{theo:Besov_large}~and~\ref{theo:data_error} for all $\alpha<2$ iff $\nu > \alpha + \nicefrac{1}{2}$.

\section{Numerical experiments}

We now present selected numerical examples to illustrate our theoretical results.
To this end, we assume that the target function $f$ is compactly supported with
\begin{equation*}
\supp(f) \subseteq B_1(0) = \bigl\{(x,y) \in \R^2 \bigm| x^2+y^2 \leq 1\bigr\}
\end{equation*}
and that the Radon data are given in {\em parallel beam geometry}
\begin{equation*}
\big\{(\Radon f)_{m,n} = \Radon f(m \, d,n \, \nicefrac{\pi}{N}) \mid -M \leq m \leq M, ~ 0 \leq n \leq N-1\big\},
\end{equation*}
where $d$ is the spacing of $2M+1$ parallel lines per angle and $N$ is the number of angles.

The reconstruction of $f$ from discrete Radon data requires a suitable {\em discretization} of the approximate FBP reconstruction formula
\begin{equation*}
f_L = \frac{1}{2} \Back\bigl(\Fourier^{-1} A_L * \Radon f\bigr).
\end{equation*}
We follow a standard approach~\cite{Natterer2001a} and apply the composite trapezoidal rule to discretize the convolution $\ast$ and back projection $\Back$, leading to the discrete convolution $*_D$ and discrete back projection $\Back_D$, respectively.
Moreover, we apply an interpolation method~$\Int$ to reduce the computational costs.
This yields the {\em discrete FBP reconstruction formula}
\begin{equation*}
f_\FBP = \frac{1}{2} \Back_D \bigl(\Int[\Fourier^{-1} A_L *_D \Radon f]\bigr).
\end{equation*}
For target functions $f$ of low regularity it is sufficient to use linear spline interpolation.
To exploit a higher regularity we apply cubic spline interpolation.
Furthermore, we couple the parameters $d > 0$ and $M,N \in \N$ with the bandwidth $L$ via
\begin{equation*}
d = \frac{\pi}{L}, \quad M = \frac{1}{d}, \quad N = \lceil \pi \, M \rceil
\end{equation*}
and choose $L$ to be a multiple of $\pi$, i.e., $L = k\pi$ for some  $k \in \N$.

\medskip

In our numerical experiments, we use the {\em Shepp-Logan phantom} with attenuation function
\begin{equation*}
f_{\mathrm{SL}} = \sum_{j=1}^{10} c_j \, f_j,
\end{equation*}
where each function $f_j$ is of the form of the characteristic function of an ellipse given by
\begin{equation*}
f_e(x,y) = \chi_{B_1(0)}\big(\bfx_{a,b,h,k,\varphi}(x,y)\big)
\end{equation*}
with
\begin{equation*}
\fl \bfx_{a,b,h,k,\varphi}(x,y) = \left(\frac{(x-h)\cos(\varphi) + (y-k)\sin(\varphi)}{a},\frac{-(x-h)\sin(\varphi) + (y-k)\cos(\varphi)}{b}\right).
\end{equation*}
The parameters of the ellipses used in the Shepp-Logan phantom can be found in~\cite{Shepp1974}.
For illustration, the Shepp-Logan phantom and its sinogram are shown in Figure~\ref{fig:shepp_logan_phantom}.

\begin{figure}[t]
\centering
\includegraphics{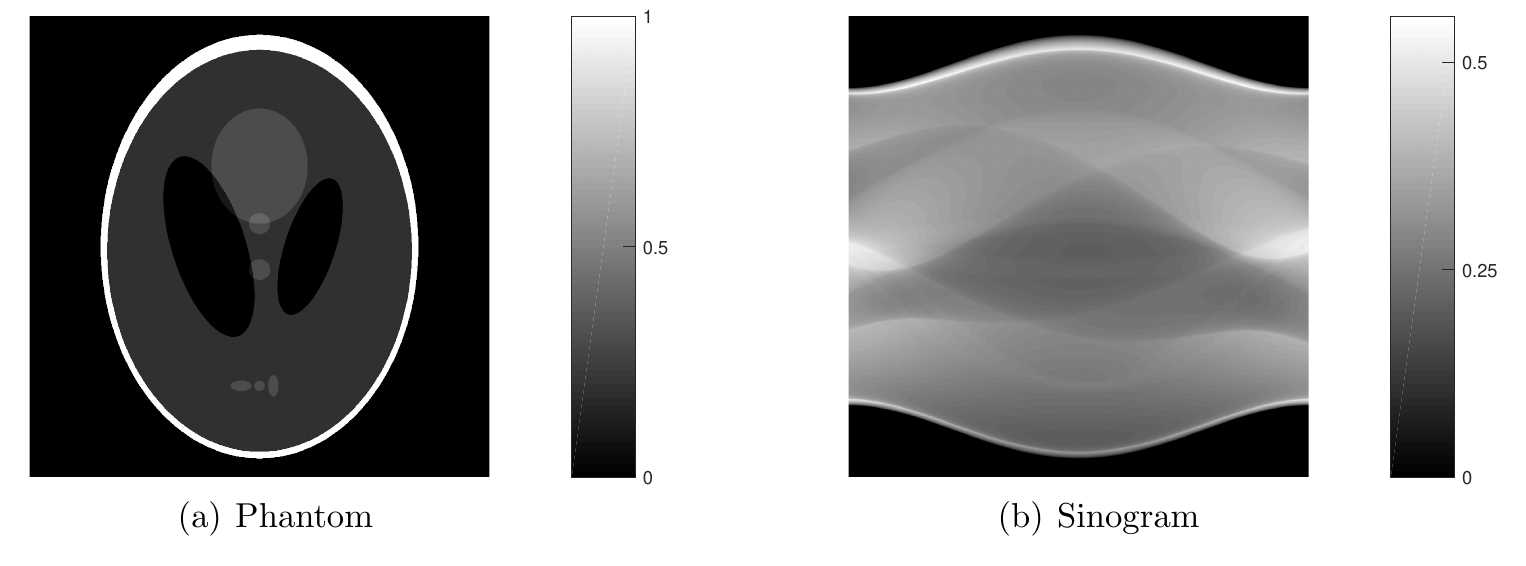}
\caption{The Shepp-Logan phantom and its sinogram.}
\label{fig:shepp_logan_phantom}
\end{figure}

According to~\cite{Berkner2011}, the function $f_\mathrm{SL}$ belongs to the Besov space $\B^{\alpha,p}_p(\R^2)$ for~${\alpha < \nicefrac{1}{p}}$, which determines the decay rate of the FBP approximation error $e_L = f - f_L$ in the $\L^p$-norm according to Theorem~\ref{theo:Besov_small}.
To observe higher rates of convergence we consider the function
\begin{equation*}
p_\sigma(x,y) = \cases{
(1-x^2-y^2)^\sigma & for $x^2+y^2 \leq 1$, \\
0 & for $x^2+y^2 > 1$}
\end{equation*}
with parameter $\sigma > 0$, which is in $\B^{\alpha,p}_p(\R^2)$ for $\alpha < \sigma + \nicefrac{1}{p}$.
Adapting the approach in~\cite{Rieder2003}, we then define the {\em smooth phantom of order $\sigma$} via
\begin{equation*}
f_{\mathrm{smooth}}^\sigma = f_1^\sigma - \frac{3}{2} \, f_2^\sigma + \frac{3}{2} \, f_3^\sigma \in \B^{\alpha,p}_p(\R^2)
\quad \forall \, \alpha < \sigma + \frac{1}{p},
\end{equation*}
where each function $f_j^\sigma$ is of the form
\begin{equation*}
f_\sigma(x,y) = p_\sigma\big(\bfx_{a,b,h,k,\varphi}(x,y)\big).
\end{equation*}
The parameters used in the definition of the smooth phantom can be found in~\cite{Rieder2003}.
For illustration, Figure~\ref{fig:smooth1_phantom} shows the smooth phantom of order $\sigma = 1$ along with its sinogram.

\begin{figure}[b]
\centering
\includegraphics{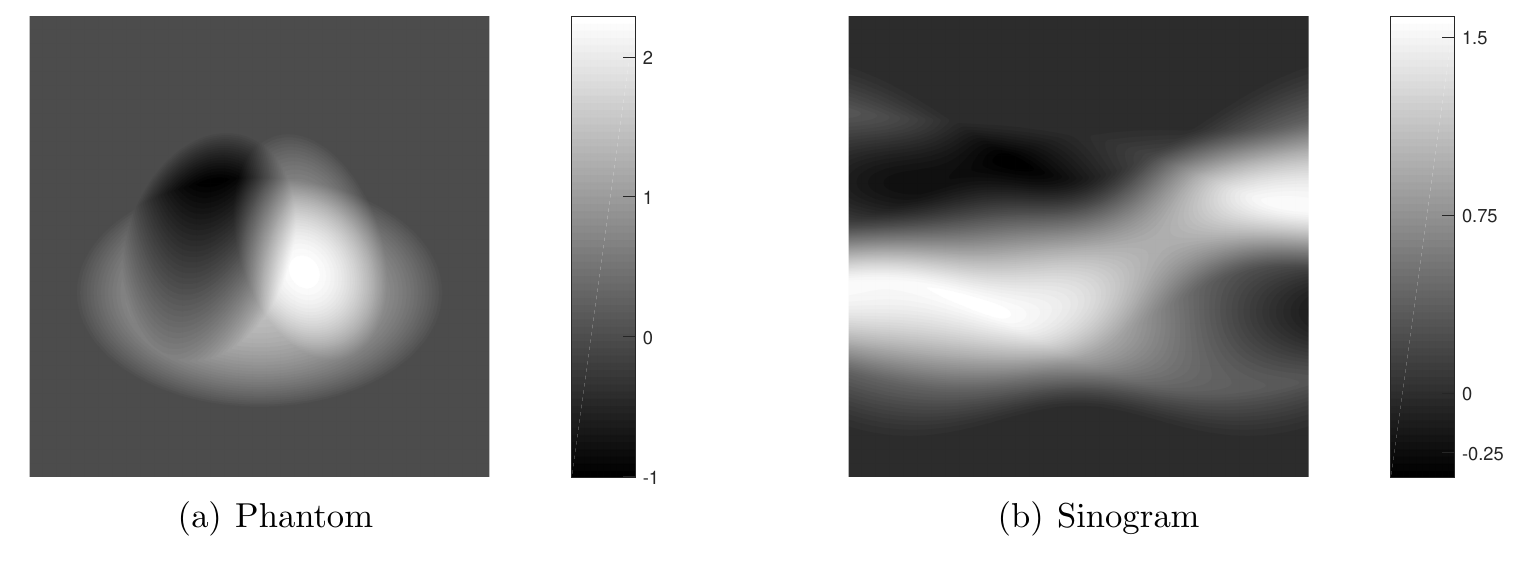}
\caption{The smooth phantom of order $\sigma = 1$ and its sinogram.}
\label{fig:smooth1_phantom}
\end{figure}

The FBP reconstructions of both phantoms are displayed in Figure~\ref{fig:FBP_reconstruction}, where we use the smooth filter from Example~\ref{ex:smooth_filter}, i.e.,
\begin{equation*}
A_L(S) = \cases{
|S| \, \bigl(1-L^{-2} \, S^2\bigr)^\nu & for $|S| \leq L$, \\
0 & for $|S| > L$,}
\end{equation*}
with $\nu = 5$ and $L = 100\pi$.
This corresponds to $M = 100$ and $N = 315$ so that $(2M+1)N = 63315$ Radon samples are taken.
In our numerical experiments, we evaluate the phantoms and reconstructions on a square grid with $1024 \times 1024$ pixels.

\begin{figure}[t]
\centering
\includegraphics{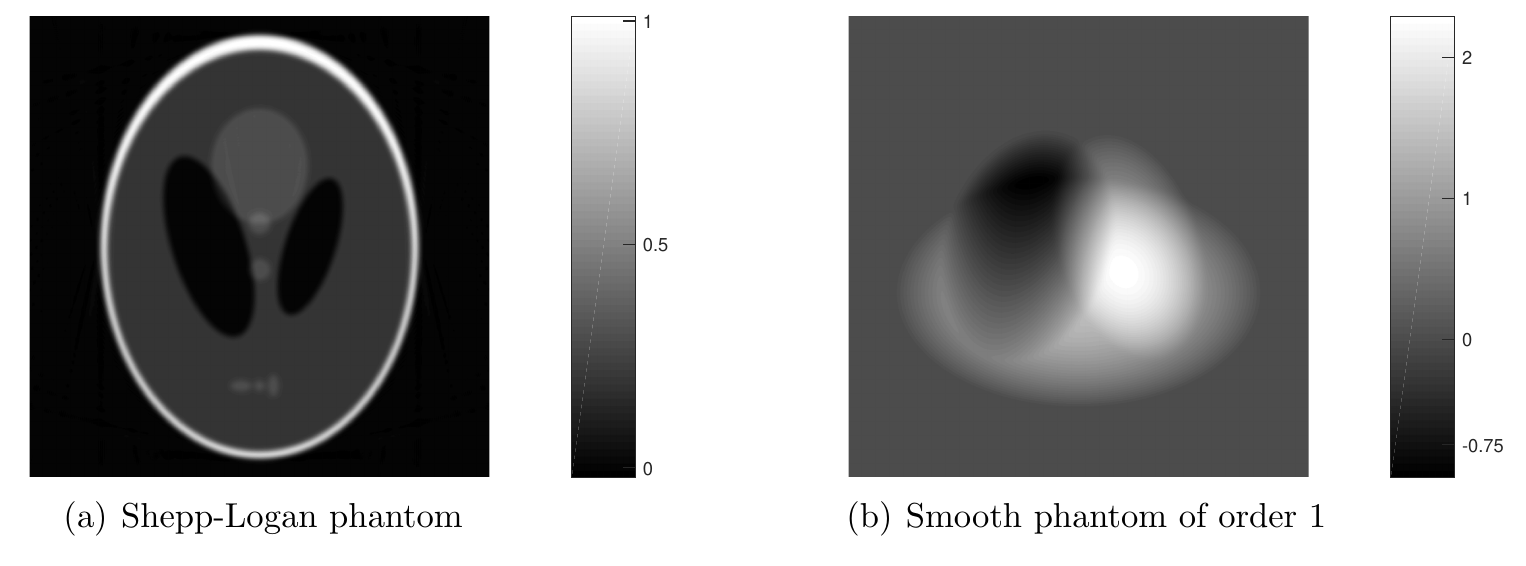}
\caption{FBP reconstructions with smooth filter of order $\nu = 5$ and $L = 100\pi$.}
\label{fig:FBP_reconstruction}
\end{figure}

We start with illustrating our theoretical results concerning the approximation error
\begin{equation*}
e_L = f - f_L.
\end{equation*}
To this end, Figure~\ref{fig:approximation_error_shepp-logan} shows the discrete $\L^p$-norm of the FBP approximation error of the Shepp-Logan phantom for $p \in \{1,\nicefrac{4}{3},2,4\}$ as a function of the bandwidth $L$ in logarithmic scales for the smooth filter with $\nu \in \{5,7\}$.
In this case, Theorem~\ref{theo:Besov_small} gives
\begin{equation*}
\|e_L\|_{\L^p(\R^2)} \leq (2 \e \alpha p)^{\nicefrac{1}{p}} \left(\int_{\R^2} \|(x,y)\|_{\R^2}^\alpha \, |K(x,y)| \: \d (x,y)\right) L^{-\alpha} \, |f|_{\B^{\alpha,p}_p(\R^2)}.
\end{equation*}

\begin{figure}[p]
\centering
\includegraphics{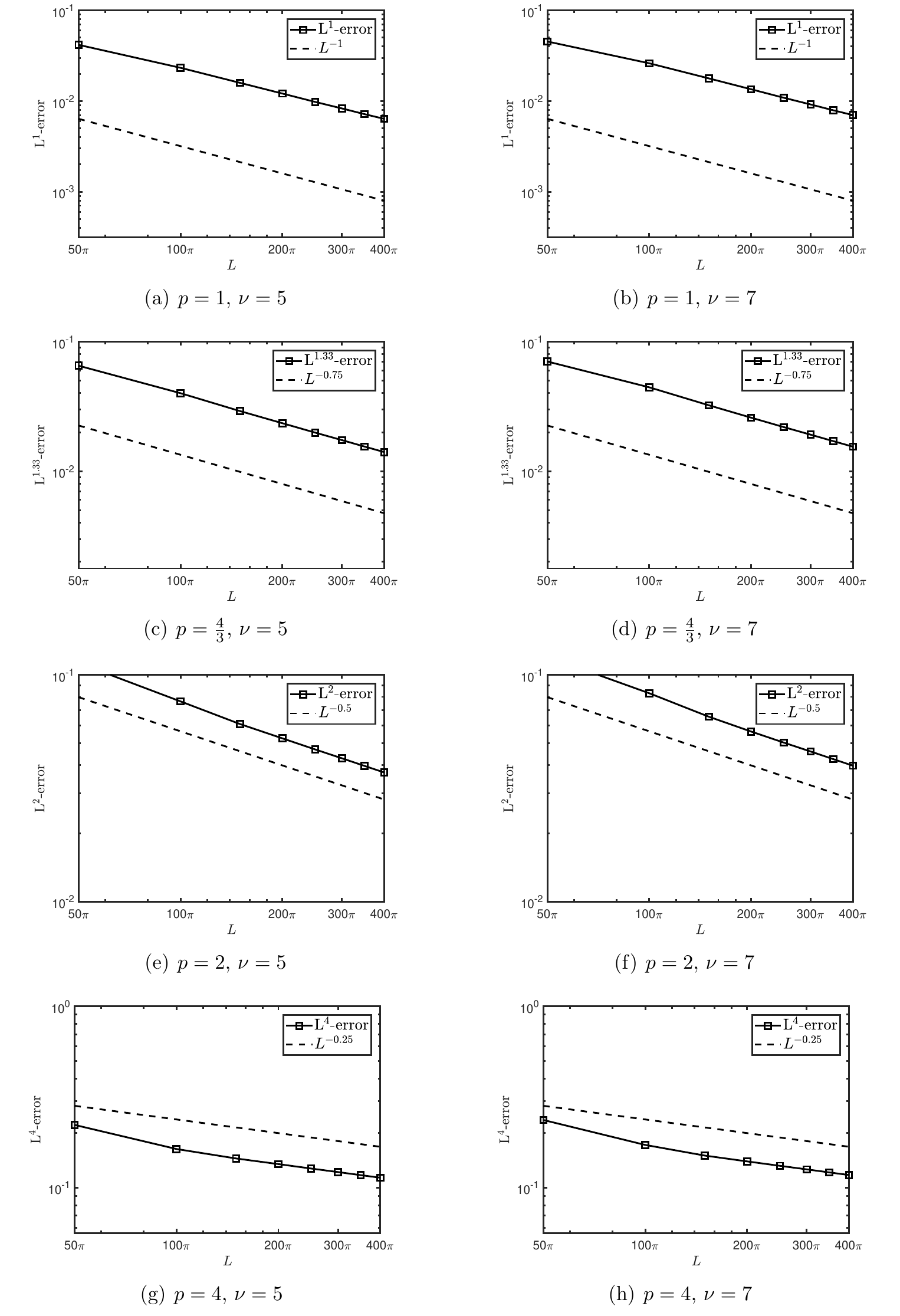}
\caption{$\L^p$-approximation error for Shepp-Logan phantom and smooth filter.}
\label{fig:approximation_error_shepp-logan}
\end{figure}

\begin{figure}[p]
\centering
\includegraphics{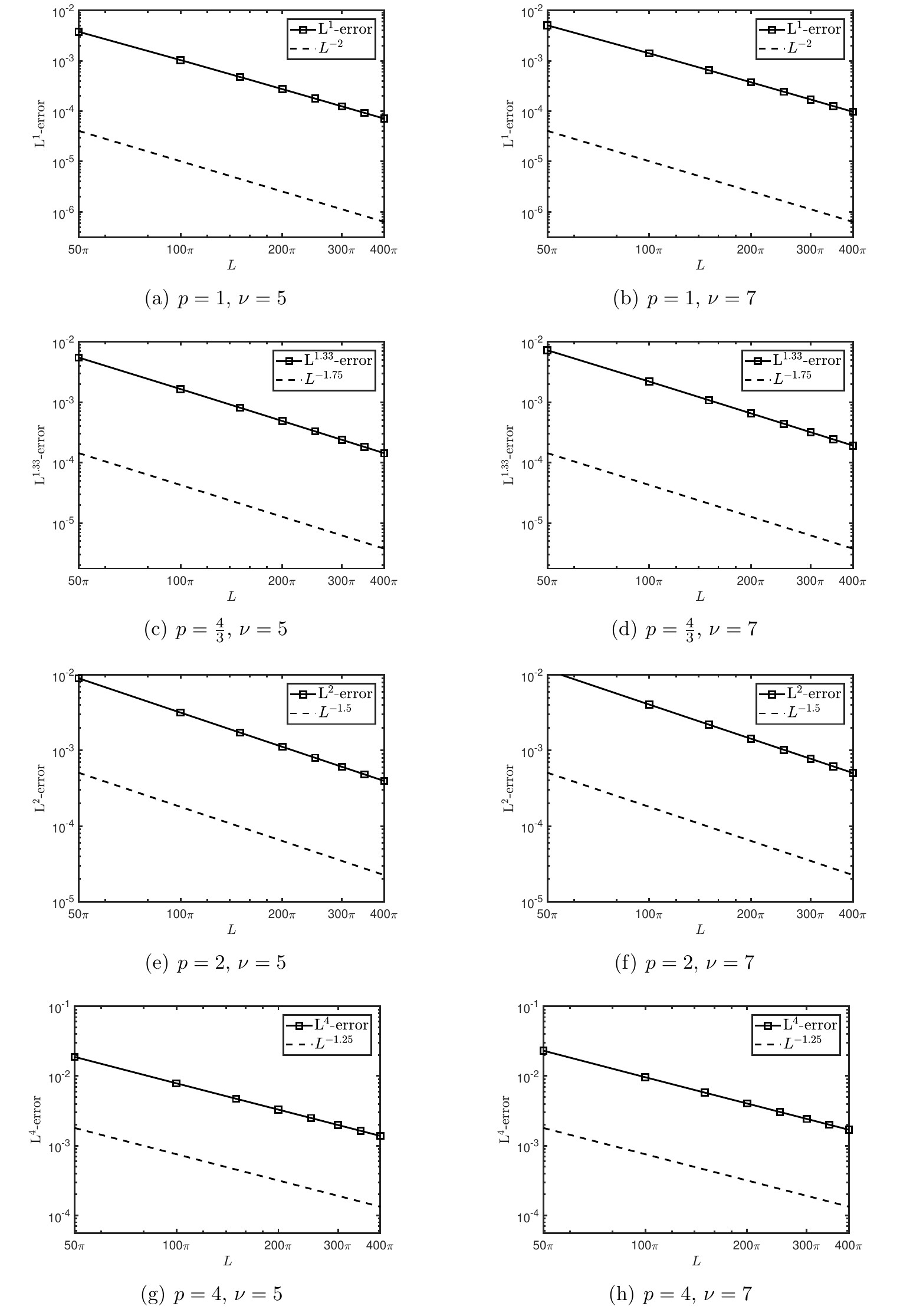}
\caption{$\L^p$-approximation error for smooth phantom ($\sigma = 1$) and smooth filter.}
\label{fig:approximation_error_smooth1}
\end{figure}

\begin{figure}[t]
\centering
\includegraphics{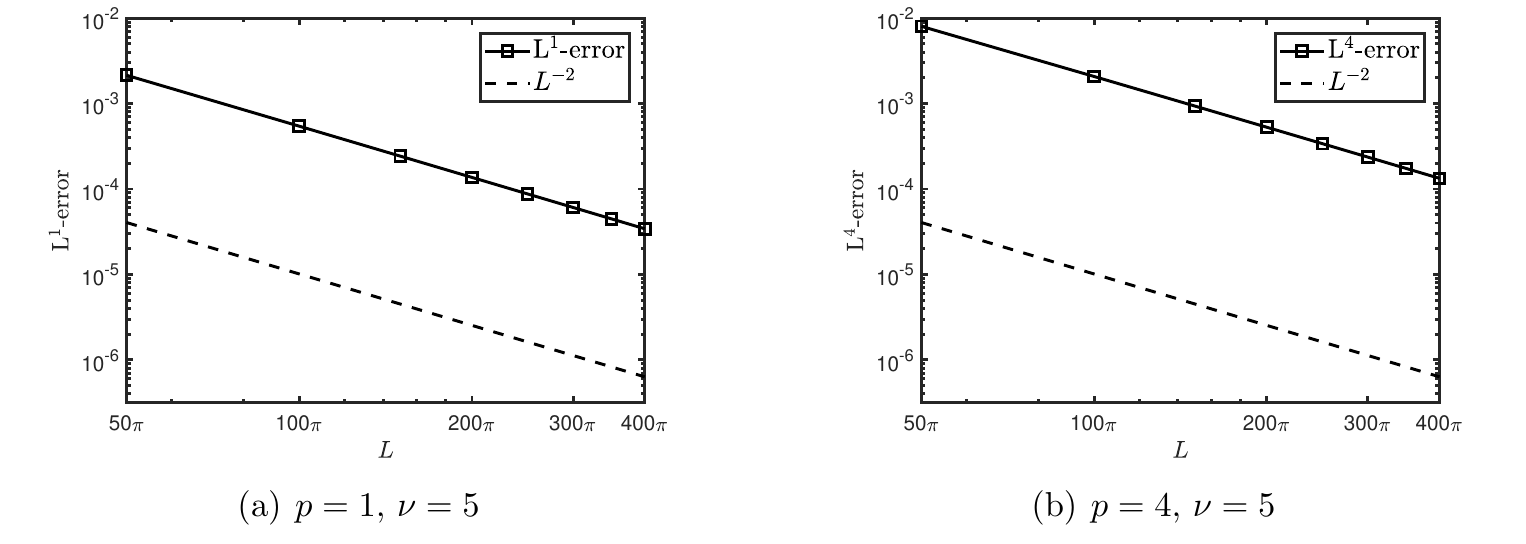}
\caption{$\L^p$-approximation error for smooth phantom ($\sigma = 2$) and smooth filter.}
\label{fig:approximation_error_smooth2}
\end{figure}

In all cases, the plots in Figure~\ref{fig:approximation_error_shepp-logan} show that the discrete $\L^p$-norm of the FBP approximation error decreases with increasing bandwidth $L$ with rate $L^{-\nicefrac{1}{p}}$ for both $\nu = 5$ and $\nu = 7$.
This is exactly the behaviour we expect as we have $f_{\mathrm{SL}} \in \B^{\alpha,p}_p(\R^2)$ for any $\alpha < \nicefrac{1}{p}$.
Moreover, we see that the error is smaller for $\nu = 5$ than for $\nu = 7$.
This observation also fits to our expectations because the constant
\begin{equation*}
c_{\alpha,K} = \int_{\R^2} \|(x,y)\|_{\R^2}^\alpha \, |K(x,y)| \: \d(x,y)
\end{equation*}
is smaller for $\nu = 5$ for all corresponding values of $\alpha \in \{1,\nicefrac{3}{4},\nicefrac{1}{2},\nicefrac{1}{4}\}$, see Table~\ref{tab:c_alpha_K}.
Note that this behaviour of the error can also be observed for other choices of $1 \leq p < \infty$ and the smooth filter with alternative parameter $\nu \in \N$.

\begin{table}[b]
\centering
\begin{tabular}{c|cccccccc}
\backslashbox{$\nu$}{$\alpha$} & $\nicefrac{1}{4}$ & $\nicefrac{1}{2}$ & $\nicefrac{3}{4}$ & $1$  & $\nicefrac{5}{4}$ & $\nicefrac{3}{2}$ & $\nicefrac{7}{4}$ & $2$ \\
\hline\\[-1ex]
$5$ & $1.4273$ & $2.0329$ & $2.9484$ & $4.3460$ & $6.5018$ & $9,8643$ & $15.1708$ & $23.6530$ \\[0.5ex]
$7$ & $1.4538$ & $2.1409$ & $3.2078$ & $4.8797$ & $7.5234$ & $11.7401$ & $18.5234$ & $29.5256$
\end{tabular}
\caption{Numerical approximation of $c_{\alpha,K}$ for smooth filter of order $\nu \in \{5,7\}$.}
\label{tab:c_alpha_K}
\end{table}

Figure~\ref{fig:approximation_error_smooth1} now shows the discrete $\L^p$-norm of the FBP approximation error of the smooth phantom of order $\sigma = 1$ for $p \in \{1,\nicefrac{4}{3},2,4\}$ as a function of the bandwidth $L$ in logarithmic scales for the smooth filter with $\nu \in \{5,7\}$.
In this case, Theorem~\ref{theo:Besov_large} gives
\begin{equation*}
\fl\|e_L\|_{\L^p(\R^2)} \leq (2 \e \theta p)^{\nicefrac{1}{p}} \, \frac{\Gamma(\theta+1)}{\Gamma(\alpha+1)} \left(\int_{\R^2} \|(x,y)\|_{\R^2}^\alpha \, |K(x,y)| \: \d (x,y)\right) L^{-\alpha} \, |f|_{\B^{\alpha,p}_p(\R^2)},
\end{equation*}
where $\theta = \alpha - \lfloor\alpha\rfloor$ is the fractional part of $\alpha > 1$.

In all cases, the plots in Figure~\ref{fig:approximation_error_smooth1} show that the discrete $\L^p$-norm of the approximation error decreases as $L^{-(1+\nicefrac{1}{p})}$ for both $\nu = 5$ and $\nu = 7$.
This is exactly the behaviour we expect, as we have $f_{\mathrm{smooth}}^1 \in \B^{\alpha,p}_p(\R^2)$ for any $\alpha < 1 + \nicefrac{1}{p}$.
Moreover, we see that the error is smaller for $\nu = 5$ than for $\nu = 7$.
This again fits to our expectations as the constant $c_{\alpha,K}$ is smaller for $\nu = 5$ for $\alpha \in \{2,\nicefrac{7}{4},\nicefrac{3}{2},\nicefrac{5}{4}\}$, see Table~\ref{tab:c_alpha_K}.

Recall that the smooth filter of order $\nu \geq 3$ satisfies the moment conditions
\begin{equation*}
\int_{\R^2} x^{j_1} y^{j_2} \, K(x,y) \: \d (x,y) = 0
\quad \forall \, 1 \leq |\boldsymbol{j}| \leq n
\end{equation*}
only for $n = 1$ so that Theorem~\ref{theo:Besov_large} can only predict a decrease of the error with rate $L^{-2}$ at most.
Thus, our theory predicts saturation of the error decay rate if the smoothness $\alpha$ of the target function is larger than $2$.
To illustrate this saturation numerically, we use the smooth phantom of order $\sigma = 2$, which satisfies $f_{\mathrm{smooth}}^2 \in \B^{\alpha,p}_p(\R^2)$ for any $\alpha < 2 + \nicefrac{1}{p}$.
Figure~\ref{fig:approximation_error_smooth2} shows the discrete $\L^p$-norm of the corresponding FBP approximation error exemplarily for $p \in \{1,4\}$, where we use the smooth filter with $\nu = 5$.
We indeed observe that the error decreases as $L^{-2}$.
This is true for all $1 \leq p < \infty$ and $\nu \geq 3$.
Hence, the predicted saturation of the decay rate is observable in numerical experiments.

\medskip

In summary, our reported numerical results totally comply with our theoretical findings concerning the FBP approximation error.

\medskip

In our second set of numerical experiments we investigate the FBP data error
\begin{equation*}
f_L - f_L^\delta
\end{equation*}
on the rectangular imaging domain $\Omega = [-1,1]^2$, where
\begin{equation*}
f_L^\delta = \frac{1}{2} \, \Back\bigl(\Fourier^{-1} A_L * g^\delta\bigr)
\end{equation*}
denotes the approximate FBP reconstruction from noisy Radon measurements $g^\delta$ with noise level $\delta > 0$.
To this end, we assume that we are given noisy measurements
\begin{equation*}
\big\{g^\delta_{m,n} \mid -M \leq m \leq M, ~ 0 \leq n \leq N-1\big\}
\end{equation*}
that satisfy
\begin{equation*}
\|\Radon f - g^\delta\|_{\ell^p} \leq \delta.
\end{equation*}

More precise, in our numerical simulations we use additive white Gaussian noise with noise-level
\begin{equation*}
\delta = 0.1 \cdot m_{\Radon f},
\end{equation*}
where
\begin{equation*}
m_{\Radon f} = \frac{1}{(2M+1)N} \sum_{m=-M}^M \sum_{n=0}^{N-1} |(\Radon f)_{m,n}|
\end{equation*}
is the arithmetic mean of the absolute values of the Radon samples of $f$.
Moreover, we again use the smooth filter with parameter $\nu \in \{5,7\}$.
Recall that, according to Theorem~\ref{theo:data_error}, the $\L^p$-norm of the data error can be estimated as
\begin{equation*}
\|f_L - f_L^\delta\|_{\L^p(\Omega)} \leq \frac{1}{2\pi^{\nicefrac{1}{p}}} \, \diam(\Omega)^{\nicefrac{1}{p}} \, \|\Fourier^{-1} A\|_{\L^1(\R)} \, L \, \delta,
\end{equation*}
where we have $\|\Fourier^{-1} A\|_{\L^1(\R^2)} = 0.2976$ for $\nu = 5$ and $\|\Fourier^{-1} A\|_{\L^1(\R^2)} = 0.2541$ for $\nu = 7$.
Consequently, we expect the error to be smaller for $\nu = 7$ and that the error increases with increasing $L$ with rate $L^1$, which is independent of the integrability parameter $p$.

\begin{figure}[p]
\centering
\includegraphics{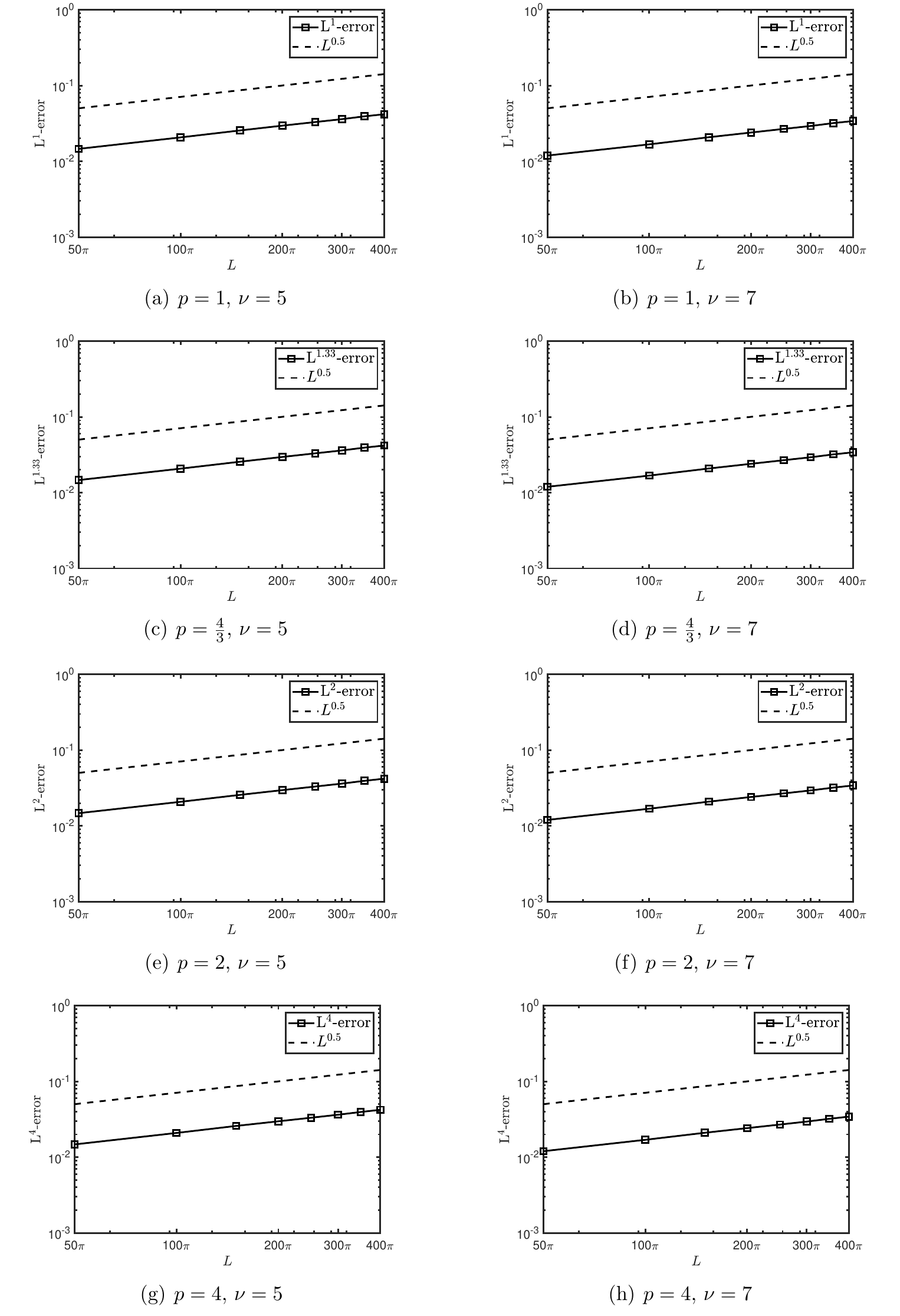}
\caption{$\L^p$-data error for Shepp-Logan phantom and smooth filter.}
\label{fig:data_error_shepp-logan}
\end{figure}

\begin{figure}[p]
\centering
\includegraphics{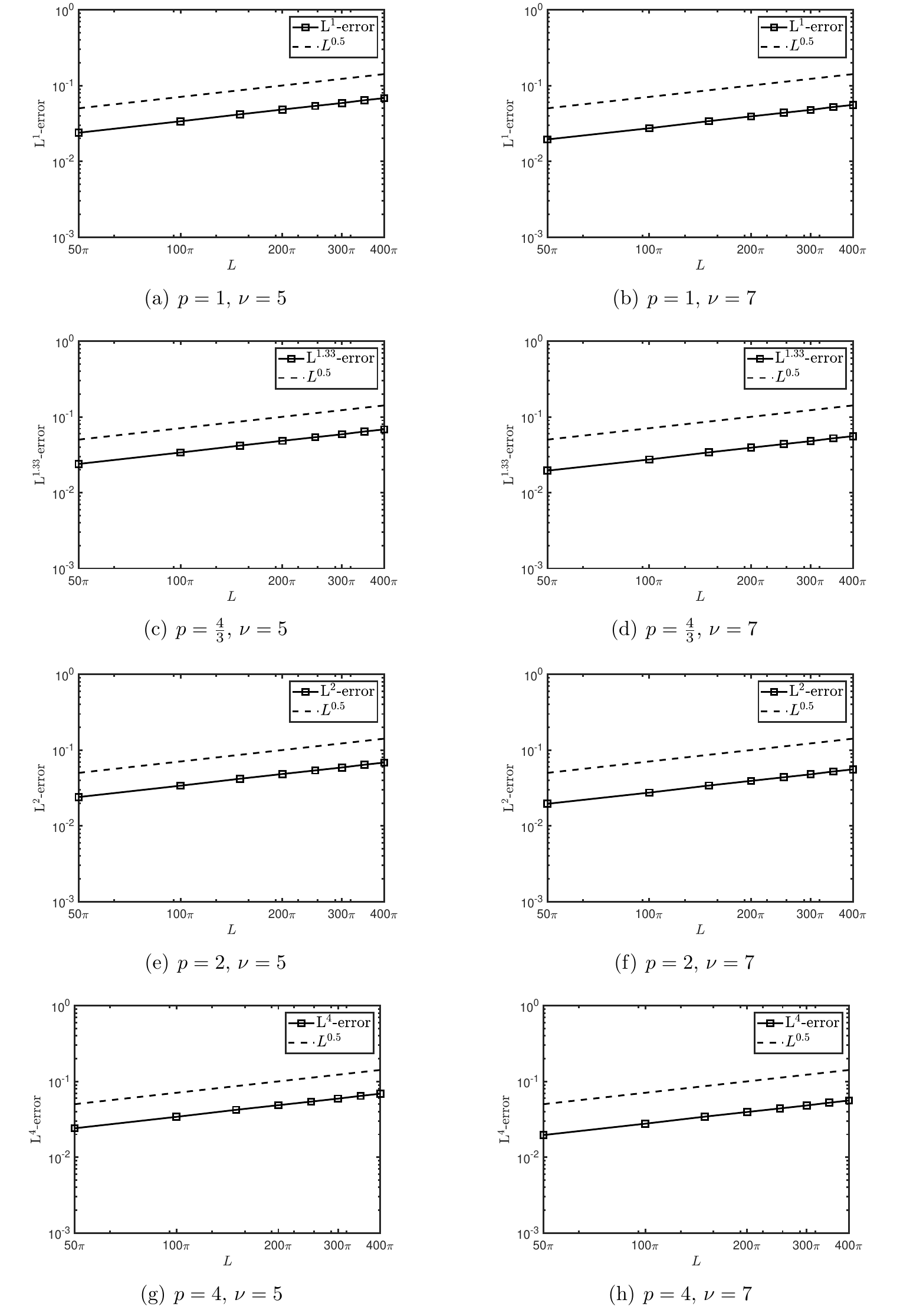}
\caption{$\L^p$-data error for smooth phantom ($\sigma = 1$) and smooth filter.}
\label{fig:data_error_smooth1}
\end{figure}

Figure~\ref{fig:data_error_shepp-logan} shows the discrete $\L^p$-norm of the FBP data error for the Shepp-Logan phantom for $p \in \{1,\nicefrac{4}{3},2,4\}$ as a function of the bandwidth $L$ in logarithmic scales.
The results for the smooth phantom of order $\sigma = 1$ are summarized in Figure~\ref{fig:data_error_smooth1}.
In all cases, we observe that the data error increases with $L$ with rate $L^{\nicefrac{1}{2}}$, which is indeed independent of $p$.
However, the growth rate is overestimated in Theorem~\ref{theo:data_error}, where the error bound increases with rate $L^1$.
Moreover, our numerical experiments  show that the data error is indeed smaller for $\nu = 7$ than for $\nu = 5$, as suggested by our error estimate.

We wish to remark that the reported error behaviour can also be observed in numerical experiments with other choices of $p$, $\nu$ and $\delta$.
Moreover, we note that the correct growth rate $L^{\nicefrac{1}{2}}$ of the FBP data error was derived in~\cite{Beckmann2019a} for the case $p = 2$.

\section{Conclusion}

In this paper, we have analysed the approximation and the total reconstruction error of the FBP method for CT reconstructions from parallel beam data. 
Our results depend on the smoothness and the flatness of the filter's window function near $0$.
Moreover, the rate of convergence depends on the smoothness of the true solution $f \in \B^{\alpha,p}_q(\R^2)$.

The convergence results cover the cases $1 \le p,q \le \infty$ and $0 < \alpha < 1$, resp. $1 < \alpha < \infty, ~ \alpha \notin \N$. Integer values for $\alpha$ require slightly different definitions of the Besov spaces.
This case is not covered by our analysis.

The numerical examples with phantoms of different Besov smoothness confirm the theoretical convergence rates of the approximation error, which require to link the flatness of the window function at $0$ with the smoothness of the target function for optimal convergence rates.
The data error, however, is overestimated by our theory and requires further in-depth research.

\section*{Acknowledgement}

J. Nickel acknowledges the support by the Federal Ministry of Education and Research (BMBF project no. 05M20LBB, DELETO). P. Maass acknowledges the support by the Deutsche Forschungsgemeinschaft (DFG) within the Research Training Group GRK 2224/2 $\pi^3$: ''Parameter Identification -- Analysis, Algorithms, Applications''.
 
J. Nickel and P. Maass acknowledge that major parts of the analytical sections as well as parts of the numerical simulations were done by M. Beckmann.

\section*{Appendix A}

We include the proof of Lemma~\ref{lem:embedding_p}.

\begin{proof}
Let $c > 1$.
Then, we can partition $(0,\infty)$ into the disjoint intervals $(c^{-k-1},c^{-k}]$, $k \in \Z$, and obtain
\begin{equation*}
\left(\int_0^\infty \big(t^{-\alpha} \, g(t)\big)^p \: \frac{\d t}{t} \right)^{\nicefrac{1}{p}} = \left(\sum_{k\in\Z} \int_{c^{-k-1}}^{c^{-k}} \big(t^{-\alpha} \, g(t)\big)^p \: \frac{\d t}{t} \right)^{\nicefrac{1}{p}}.
\end{equation*}
Since $g$ is monotonically increasing, we have
\begin{equation*}
\frac{g(c^{-k-1})}{c^{-k\alpha}} \leq \frac{g(t)}{t^\alpha} \leq \frac{g(c^{-k})}{c^{(-k-1)\alpha}}
\quad \forall \, t \in (c^{-k-1},c^{-k}]
\end{equation*}
such that
\begin{eqnarray*}
\fl \int_0^\infty \Big(\frac{g(t)}{t^\alpha}\Big)^p \: \frac{\d t}{t} & \leq \sum_{k\in\Z} \int_{c^{-k-1}}^{c^{-k}} \Big(\frac{g(c^{-k})}{c^{(-k-1)\alpha}}\Big)^p \: \frac{\d t}{t} = \sum_{k\in\Z} \Big(\frac{g(c^{-k})}{c^{-k\alpha}}\Big)^p \int_{c^{-k-1}}^{c^{-k}} c^{p\alpha} \: \frac{\d t}{t} \\
& = \sum_{k\in\Z} c^{p\alpha} \, \log(c) \,\Big(\frac{g(c^{-k})}{c^{-k\alpha}}\Big)^p.
\end{eqnarray*}
On the other hand, we have
\begin{eqnarray*}
\fl \int_0^\infty \Big(\frac{g(t)}{t^\alpha}\Big)^q \: \frac{\d t}{t} & = \sum_{k\in\Z} \int_{c^{-k}}^{c^{-k+1}} \Big(\frac{g(t)}{t^\alpha}\Big)^q \: \frac{\d t}{t} \geq \sum_{k\in\Z} \int_{c^{-k}}^{c^{-k+1}} \Big(\frac{g(c^{-k})}{c^{(-k+1)\alpha}}\Big)^q \: \frac{\d t}{t} \\
& = \sum_{k\in\Z} \Big(\frac{g(c^{-k})}{c^{-k\alpha}}\Big)^q \int_{c^{-k}}^{c^{-k+1}} c^{-q\alpha} \: \frac{\d t}{t} = \sum_{k\in\Z} c^{-q\alpha} \, \log(c) \, \Big(\frac{g(c^{-k})}{c^{-k\alpha}}\Big)^q.
\end{eqnarray*}
Thus, using the fact that $\ell^p \subset \ell^q$ for all $1 \leq p < q < \infty$ with
$\|a\|_{\ell^q} \leq \|a\|_{\ell^p}$ for all $a \in \ell^q$, we finally obtain
\begin{eqnarray*}
\left(\int_0^\infty \Big(\frac{g(t)}{t^\alpha}\Big)^p \: \frac{\d t}{t} \right)^{\nicefrac{1}{p}}
%& \leq \left(\sum_{k\in\Z} \bigg(c^\alpha \, \log(c)^{\nicefrac{1}{p}} \, \frac{g(c^{-k})}{c^{-k\alpha}}\bigg)^p\right)^{\nicefrac{1}{p}} \\
& \leq \left(\sum_{k\in\Z} \bigg(c^\alpha \, \log(c)^{\nicefrac{1}{p}} \, \frac{g(c^{-k})}{c^{-k\alpha}}\bigg)^q\right)^{\nicefrac{1}{q}} \\
& = c^{2\alpha} \, \log(c)^{\nicefrac{1}{p}-\nicefrac{1}{q}} \left(\sum_{k\in\Z} c^{-q\alpha} \, \log(c) \, \Big(\frac{g(c^{-k})}{c^{-k\alpha}}\Big)^q\right)^{\nicefrac{1}{q}} \\
& \leq c^{2\alpha} \, \log(c)^{\nicefrac{1}{p}-\nicefrac{1}{q}} \left(\int_0^\infty \Big(\frac{g(t)}{t^\alpha}\Big)^q \: \frac{\d t}{t} \right)^{\nicefrac{1}{q}},
\end{eqnarray*}
as stated.
\end{proof}

\section*{Appendix B}

We include the proof of Lemma~\ref{lem:limit}.

\begin{proof}
Let $c > 1$.
Assume that
\begin{equation*}
0 < \left(\int_0^\infty \big(t^{-\alpha} \, g(t)\big)^q \: \frac{\d t}{t} \right)^{\nicefrac{1}{q}} < \infty.
\end{equation*}
Otherwise, the stated estimate is trivially true.
Then, by Lemma~\ref{lem:embedding_p} for all $1 \leq q < p < \infty$ holds that
\begin{eqnarray*}
\left(\int_0^\infty \Big(\frac{g(t)}{t^\alpha}\Big)^p \: \frac{\d t}{t} \right)^{\nicefrac{1}{p}} & \leq c^{2\alpha} \, \log(c)^{\nicefrac{1}{p}-\nicefrac{1}{q}} \left(\int_0^\infty \Big(\frac{g(t)}{t^\alpha}\Big)^q \: \frac{\d t}{t} \right)^{\nicefrac{1}{q}} \\
& \to c^{2\alpha} \, \log(c)^{-\nicefrac{1}{q}} \left(\int_0^\infty \Big(\frac{g(t)}{t^\alpha}\Big)^q \: \frac{\d t}{t} \right)^{\nicefrac{1}{q}} < \infty
\end{eqnarray*}
for $p \to \infty$ and it suffices to prove that
\begin{equation*}
\left(\int_0^\infty \Big(\frac{g(t)}{t^\alpha}\Big)^p \: \frac{\d t}{t} \right)^{\nicefrac{1}{p}} ~\to~ \sup_{t > 0} \frac{g(t)}{t^\alpha}
\quad \mbox{ for } \quad
p \to \infty.
\end{equation*}
Since $g$ is monotonically increasing and
\begin{equation*}
\left(\int_0^\infty \Big(\frac{g(t)}{t^\alpha}\Big)^q \: \frac{\d t}{t} \right)^{\nicefrac{1}{q}} < \infty,
\end{equation*}
we have that
\begin{equation*}
M = \sup_{t>0} \frac{g(t)}{t^\alpha} < \infty.
\end{equation*}
Now fix $0 < \delta < M$ and consider
\begin{equation*}
D = \left\{t > 0 \mid \frac{g(t)}{t^\alpha} \geq M-\delta\right\}.
\end{equation*}
By definition, we have $\lambda(D) > 0$ and obtain
\begin{equation*}
\fl \infty > \left(\int_0^\infty \Big(\frac{g(t)}{t^\alpha}\Big)^q \: \frac{\d t}{t} \right)^{\nicefrac{1}{q}} \geq \left(\int_D \Big(\frac{g(t)}{t^\alpha}\Big)^q \, t^{-1} \: \d t \right)^{\nicefrac{1}{q}} \geq (M-\delta) \left(\int_D t^{-1} \: \d t \right)^{\nicefrac{1}{q}}.
\end{equation*}
In particular, we have
\begin{equation*}
0 < \int_D t^{-1} \: \d t < \infty
\end{equation*}
so that
\begin{equation*}
\left(\int_D t^{-1} \: \d t \right)^{\nicefrac{1}{p}} \to 1
\quad \mbox{ for } \quad
p \to \infty.
\end{equation*}
This shows that
\begin{equation*}
\liminf_{p \to \infty} \left(\int_0^\infty \Big(\frac{g(t)}{t^\alpha}\Big)^p \: \frac{\d t}{t} \right)^{\nicefrac{1}{p}} \geq \left(\sup_{t>0} \frac{g(t)}{t^\alpha}\right)-\delta
\end{equation*}
and, since $0 < \delta < M$ was arbitrary,
\begin{equation*}
\liminf_{p \to \infty} \left(\int_0^\infty \Big(\frac{g(t)}{t^\alpha}\Big)^p \: \frac{\d t}{t} \right)^{\nicefrac{1}{p}} \geq \sup_{t>0} \frac{g(t)}{t^\alpha}.
\end{equation*}
On the other hand, for $p > q$ we have
\begin{equation*}
\fl \left(\int_0^\infty \Big(\frac{g(t)}{t^\alpha}\Big)^p \: \frac{\d t}{t} \right)^{\nicefrac{1}{p}} \leq \left(\sup_{t>0} \frac{g(t)}{t^\alpha}\right)^{1-\nicefrac{q}{p}} \left(\int_0^\infty \Big(\frac{g(t)}{t^\alpha}\Big)^q \: \frac{\d t}{t} \right)^{\nicefrac{1}{p}} ~\to~ \sup_{t>0} \frac{g(t)}{t^\alpha}
\end{equation*}
for $p \to \infty$, since
\begin{equation*}
0 < \int_0^\infty \Big(\frac{g(t)}{t^\alpha}\Big)^q \: \frac{\d t}{t} < \infty.
\end{equation*}
Consequently, we also have
\begin{equation*}
\limsup_{p \to \infty} \left(\int_0^\infty \Big(\frac{g(t)}{t^\alpha}\Big)^p \: \frac{\d t}{t} \right)^{\nicefrac{1}{p}} \leq \sup_{t>0} \frac{g(t)}{t^\alpha}
\end{equation*}
so that in total
\begin{equation*}
\lim_{p \to \infty} \left(\int_0^\infty \Big(\frac{g(t)}{t^\alpha}\Big)^p \: \frac{\d t}{t} \right)^{\nicefrac{1}{p}} = \sup_{t>0} \frac{g(t)}{t^\alpha}.
\end{equation*}
Hence, with Lemma~\ref{lem:embedding_p} we can conclude that
\begin{equation*}
\sup_{t > 0} t^{-\alpha} \, g(t) \leq c^{2\alpha} \, \log(c)^{-\nicefrac{1}{q}} \left(\int_0^\infty \big(t^{-\alpha} \, g(t)\big)^q \: \frac{\d t}{t} \right)^{\nicefrac{1}{q}}
\end{equation*}
and the proof is complete.
\end{proof}

\section*{Appendix C}

We include the proof of Lemma~\ref{lem:limit}.

\begin{proof}
Since $g$ is increasing and bounded from above, $g$ is Lebesgue measurable and convergent for $t \to \infty$, i.e., there exists $l \in [0,\infty)$ such that $l = \lim_{t \to \infty} g(t)$.
If $l = 0$, we have $g \equiv 0$ and the statement is trivially true.
Thus, assume that $l \in (0,\infty)$.
To prove the statement, we show \\
\begin{minipage}{0.5\textwidth}
\begin{itemize}
\item[(1)] $\displaystyle \liminf_{\alpha \searrow 0} \left(\alpha q \int_0^\infty \Big(\frac{g(t)}{t^\alpha}\Big)^q \: \frac{\d t}{t} \right)^{\nicefrac{1}{q}} \geq l$,
\end{itemize}
\end{minipage}%
\begin{minipage}{0.5\textwidth}
\begin{itemize}
\item[(2)] $\displaystyle \limsup_{\alpha \searrow 0} \left(\alpha q \int_0^\infty \Big(\frac{g(t)}{t^\alpha}\Big)^q \: \frac{\d t}{t} \right)^{\nicefrac{1}{q}} \leq l$.
\end{itemize}
\end{minipage}

\medskip

\noindent
\underline{ad (1):} Fix $0 < l_0 < l$.
Then, there is $T_0 > 0$ such that
\begin{equation*}
g(t) \geq l_0
\quad \forall \, t \geq T_0.
\end{equation*}
With this, for $0 < \alpha \leq \sigma$, we obtain
\begin{equation*}
\alpha q \int_0^\infty \Big(\frac{g(t)}{t^\alpha}\Big)^q \: \frac{\d t}{t} = \alpha q \int_0^{T_0} \Big(\frac{g(t)}{t^\alpha}\Big)^q \: \frac{\d t}{t} + \alpha q \int_{T_0}^\infty \Big(\frac{g(t)}{t^\alpha}\Big)^q \: \frac{\d t}{t} = I_1 + I_2
\end{equation*}
with
\begin{equation*}
\fl 0 \leq I_1 = \alpha q \int_0^{T_0} t^{(\sigma - \alpha) q} \, \Big(\frac{g(t)}{t^\sigma}\Big)^q \: \frac{\d t}{t} \leq \alpha q \, T_0^{(\sigma - \alpha) q} \int_0^\infty \Big(\frac{g(t)}{t^\sigma}\Big)^q \: \frac{\d t}{t} \to 0
\quad \mbox{ for } \quad
\alpha \searrow 0
\end{equation*}
and
\begin{equation*}
I_2 \geq \alpha q \, l_0^q \int_{T_0}^\infty \Big(\frac{1}{t^\alpha}\Big)^q \: \frac{\d t}{t} = l_0^q \, T_0^{-\alpha q} \to l_0^q
\quad \mbox{ for } \quad
\alpha \searrow 0.
\end{equation*}
In particular, taking the limit $l_0 \nearrow l$ gives
\begin{equation*}
\lim_{\alpha \searrow 0} I_1 = 0
\quad \mbox{ and } \quad
\liminf_{\alpha \searrow 0} I_2 \geq l^q.
\end{equation*}

\noindent
\underline{ad (2):} Fix $l_1 > l$.
Then, there is $T_1 > 0$ such that
\begin{equation*}
g(t) \leq l_1
\quad \forall \, t \geq T_1.
\end{equation*}
With this, for $0 < \alpha \leq \sigma$, we obtain
\begin{equation*}
\alpha q \int_0^\infty \Big(\frac{g(t)}{t^\alpha}\Big)^q \: \frac{\d t}{t} = \alpha q \int_0^{T_1} \Big(\frac{g(t)}{t^\alpha}\Big)^q \: \frac{\d t}{t} + \alpha q \int_{T_1}^\infty \Big(\frac{g(t)}{t^\alpha}\Big)^q \: \frac{\d t}{t} = I_1 + I_2
\end{equation*}
with
\begin{equation*}
\fl I_1 = \alpha q \int_0^{T_1} t^{(\sigma - \alpha) q} \, \Big(\frac{g(t)}{t^\sigma}\Big)^q \: \frac{\d t}{t} \leq \alpha q \, T_1^{(\sigma - \alpha) q} \int_0^\infty \Big(\frac{g(t)}{t^\sigma}\Big)^q \: \frac{\d t}{t} \to 0
\quad \mbox{ for } \quad
\alpha \searrow 0
\end{equation*}
and
\begin{equation*}
I_2 \leq \alpha q \, l_1^q \int_{T_1}^\infty \Big(\frac{1}{t^\alpha}\Big)^q \: \frac{\d t}{t} = l_1^q \, T_1^{-\alpha q} \to l_1^q
\quad \mbox{ for } \quad
\alpha \searrow 0.
\end{equation*}
In particular, taking the limit $l_1 \searrow l$ gives
\begin{equation*}
\lim_{\alpha \searrow 0} I_1 = 0
\quad \mbox{ and } \quad
\limsup_{\alpha \searrow 0} I_2 \leq l^q.
\end{equation*}

With (1) and (2) we now have
\begin{equation*}
l \leq \liminf_{\alpha \searrow 0} \left(\alpha q \int_0^\infty \Big(\frac{g(t)}{t^\alpha}\Big)^q \: \frac{\d t}{t} \right)^{\nicefrac{1}{q}} \leq \limsup_{\alpha \searrow 0} \left(\alpha q \int_0^\infty \Big(\frac{g(t)}{t^\alpha}\Big)^q \: \frac{\d t}{t} \right)^{\nicefrac{1}{q}} \leq l
\end{equation*}
so that
\begin{equation*}
\lim_{\alpha \searrow 0} \left(\alpha q \int_0^\infty \Big(\frac{g(t)}{t^\alpha}\Big)^q \: \frac{\d t}{t} \right)^{\nicefrac{1}{q}} = l = \lim_{t \to \infty} g(t),
\end{equation*}
as stated.
\end{proof}

\section*{References}

\end{document}